\documentclass[11pt,a4paper]{article}

\usepackage[T1]{fontenc}
\usepackage[utf8]{inputenc}
\usepackage{amsmath,amssymb,amsthm}
\usepackage{mathtools}
\usepackage{booktabs}
\usepackage{geometry}
\usepackage{enumitem}
\usepackage{xcolor}
\usepackage[colorlinks=true,linkcolor=blue!55!black,citecolor=blue!55!black,
            urlcolor=blue!55!black]{hyperref}

\hypersetup{
  pdftitle={The S-function at arbitrary order: hyperplanes of symmetries and
            the realisability condition},
  pdfauthor={L. G. S. Duarte and L. A. C. P. da Mota}
}

\newtheorem{theorem}{Theorem}[section]
\newtheorem{lemma}[theorem]{Lemma}
\newtheorem{proposition}[theorem]{Proposition}
\newtheorem{corollary}[theorem]{Corollary}
\theoremstyle{definition}
\newtheorem{definition}[theorem]{Definition}
\newtheorem{example}[theorem]{Example}
\theoremstyle{remark}

\newcommand{\D}{\mathcal{D}}
\newcommand{\CC}{\mathcal{C}}
\newcommand{\VV}{\mathcal{V}}
\newcommand{\PP}{\mathbb{P}}
\newcommand{\Gr}{\operatorname{Gr}}
\newcommand{\jj}{\mathbf{j}}
\newcommand{\vv}{\mathbf{v}}
\newcommand{\Ef}{\mathfrak{E}}
\newcommand{\Lie}{\mathcal{L}}
\newcommand{\todo}[1]{\textcolor{red}{\textbf{[#1]}}}

\title{\textbf{The $S$-function at arbitrary order:\\
hyperplanes of symmetries and the realisability condition}}

\author{%
L.\,G.\,S.~Duarte\thanks{\texttt{lgsduarte@gmail.com}} \ and L.\,A.\,C.\,P.~da~Mota\thanks{Corresponding author. \texttt{lacpdamota@uerj.br}}\\[6pt]
\small Departamento de F\'{\i}sica Te\'orica, Instituto de F\'{\i}sica,\\
\small Universidade do Estado do Rio de Janeiro,\\
\small Rua S\~ao Francisco Xavier 524, 20550-013 Rio de Janeiro, RJ, Brazil}

\date{}

\begin{document}
\maketitle

\begin{abstract}
For a scalar equation $y^{(n)}=\varphi(x,y,\dots,y^{(n-1)})$ the $S$-function of
the extended Prelle--Singer method is a vector $S=(S_0,\dots,S_{n-2})$, the
null-form vector of Chandrasekar, Senthilvelan, Lakshmanan and collaborators,
and the jet gradient of a first integral solves the adjoint of the variational
equation.  We describe the solutions of the system satisfied by $S$
intrinsically, and determine which of them come from first integrals.  Over the
field $\CC$ of first integrals the solutions are exactly the hyperplanes of the
$n$-dimensional space $\VV$ of symmetry characteristics, recovered from $n-1$
independent characteristics by a Plücker formula; in flow-box coordinates they
are the hyperplane fields on the local space of leaves.  A solution is the
$S$-vector of a first integral if and only if the corresponding hyperplane
field is integrable.  This Frobenius condition splits into the $S$-system
itself plus $\binom n3$ further differential conditions, so it is vacuous for
$n=2$ and a genuine restriction for every $n\ge3$, for which we give an
explicit non-realisable solution.  The same picture explains why
$\lambda$-symmetries and $S$-functions coincide only when $n=2$: line fields and
hyperplane fields agree only on surfaces.  For the computation of $S$ we
record that the system is equivalent to one scalar equation for the last
component, which is the Riccati form of the adjoint variational equation, and
we spell out what this does and does not imply for a rational search.
\end{abstract}

\noindent\textbf{Keywords:} Prelle--Singer method; $S$-function; null form;
adjoint symmetries; $\lambda$-symmetries; Frobenius integrability;
Jacobi last multiplier.

\medskip
\noindent\textbf{MSC (2020):} 34A05, 34A34, 34C14, 58A17.

\section{Introduction}\label{sec:intro}

The extension of the Prelle--Singer procedure~\cite{PrelleSinger1983} to
second-order equations~\cite{DuarteDuarteDaMota2001,DDMS2002} replaces the
search for a first integral $I$ of $y''=\varphi(x,y,y')$ by the search for the
single function $S=I_y/I_{y'}$, which satisfies a Riccati equation along the
flow.  The method has been developed into an effective tool for rational
equations~\cite{ACDM2019,DEM2024,NewS2023}.  The same object was reached
independently as the \emph{null form} of Chandrasekar, Senthilvelan and
Lakshmanan~\cite{CSL2005,CSL2005n,CSL2006} and, up to sign, as the $\lambda$ of
the $\lambda$-symmetries of Muriel and Romero~\cite{MurielRomero2001,MurielRomero2009}.

At order $n$ the $S$-function becomes a vector.  Most of the elementary
structure is already in print, and we state precisely what, because it
delimits what is new here.
\begin{itemize}[leftmargin=*,itemsep=2pt]
\item The system satisfied by the null-form vector at general order, with its
compatibility conditions, is in~\cite{CSL2005n}.  For $n=3$ the system was
reduced to a single equation for one component in~\cite[eqs.~(3.39)--(3.40)]{CSL2006}.
\item For $n=2$, the linearisation $S=-\D(X)/X$ of the $S$-equation into the
variational equation, the identity $\lambda=-S$, and the relation between
integrating factor, characteristic and Jacobi multiplier are
in~\cite{Mohanasubha2014}.
\item For general $n$, the null-form vector normalised by its last component,
its linearisation, the adjoint variational equation with the statement that
the integrating factor is an adjoint symmetry, and the relation
$\sum_kS_k\D^k(Q)=0$ between the null-form vector and symmetry characteristics
$Q$ are in~\cite{Mohanasubha2016}; that a single $\lambda$ does not fix the
null forms when $n=3$ is observed in~\cite{Mohanasubha2015}.
\item That adjoint symmetries and integrating factors are related by an
additional condition is the theme of~\cite{AncoBluman1998}.
\end{itemize}

What we add is a description of the \emph{solution set} of the $S$-system and
of the subset coming from first integrals.

\begin{enumerate}[label=\textbf{(R\arabic*)},leftmargin=*,itemsep=4pt]
\item \textbf{Solutions are hyperplanes.}  Over the field $\CC$ of first
integrals, the solutions of the $S$-system are in bijection with the
hyperplanes of the $n$-dimensional $\CC$-space $\VV$ of symmetry
characteristics, $\{\text{solutions}\}=\PP(\VV^\vee)$, and $n-1$ independent
characteristics give the solution by a Plücker formula
(Theorem~\ref{thm:plucker}).
\item \textbf{Realisability.}  A solution is the $S$-vector of a first
integral if and only if a Frobenius condition holds.  Given the $S$-system,
this condition reduces to the vanishing of $\binom n3$ components; in
flow-box coordinates the solutions are the hyperplane fields on the local leaf
space and the realisable ones are the integrable ones
(Theorems~\ref{thm:frobenius} and~\ref{thm:intrinsic}).  The inclusion of the
realisable solutions is strict for every $n\ge3$
(Corollary~\ref{cor:strict}).
\item \textbf{The two accidents of order two.}  On a surface every
hyperplane field is integrable, and line fields and hyperplane fields agree.
These are the reasons why every solution of the $S$-equation comes from a
first integral when $n=2$, and why $\lambda=-S$ holds only then
(Corollary~\ref{cor:accidents}).
\end{enumerate}

For the search we record, in Theorem~\ref{thm:collapse}, that the $S$-system is
equivalent to one scalar equation $\Ef_n(\sigma)=0$ for the last component
$\sigma$, and that this equation is the Riccati form of the adjoint
variational equation $L^*(R)=0$ under $\sigma=-\D(R)/R-\varphi_{n-1}$; the case
$n=3$ is~\cite[(3.39)--(3.40)]{CSL2006}.  The practical content is that a
rational solution of the $S$-system corresponds to an adjoint symmetry $R$ with
rational logarithmic derivative, and that a rational ansatz is an ansatz for one
function.
Section~\ref{sec:method} states the consequences for the method, including the
facts that a solution of $\Ef_n=0$ must be tested for realisability when
$n\ge3$, and that recovering $I$ from $\sigma$ is in general an
integrating-factor problem rather than a pair of quadratures.

\paragraph{Structure.}  Section~\ref{sec:setting} fixes the setting and recalls
$n=2$.  Section~\ref{sec:adjoint} derives the $S$-system and its scalar
form.  Section~\ref{sec:grassmann} proves (R1), Section~\ref{sec:frobenius}
proves (R2) and (R3), Section~\ref{sec:method} draws the consequences for the
method, Section~\ref{sec:examples} works the examples, and
Section~\ref{sec:outlook} lists open problems.  Appendix~\ref{app:closing}
records $\Ef_n$ for $n\le4$ and the shape of its coefficients.

\section{Setting}\label{sec:setting}

\subsection{The field of first integrals}\label{ssec:field}

Let
\begin{equation}\label{eq:nODE}
  y^{(n)} = \varphi(x,z_0,z_1,\dots,z_{n-1}),\qquad z_k\equiv y^{(k)},
\end{equation}
with $\varphi$ holomorphic (in applications, rational) on a domain of
$\mathbb{C}^{n+1}$, and let
\begin{equation}\label{eq:Dn}
  \D = \partial_x + \sum_{k=0}^{n-2}z_{k+1}\,\partial_{z_k}
       + \varphi\,\partial_{z_{n-1}},\qquad
  \vv=(1,z_1,\dots,z_{n-1},\varphi),
\end{equation}
be the total derivative along~\eqref{eq:nODE}.  We write
$\varphi_k:=\partial\varphi/\partial z_k$.

Throughout, $U$ is a flow box: there are holomorphic first integrals
$I_1,\dots,I_n$ on $U$ such that $(x,I_1,\dots,I_n)$ are coordinates on $U$, in
which $\D=\partial_x$ and $U\cong J\times N$ with $J\subset\mathbb{C}$ and
$N\subset\mathbb{C}^n$ connected open sets.  All functions are meromorphic on $U$.  The
meromorphic functions on $U$ form a field $\mathcal{M}$, $\D$ is a derivation
of $\mathcal{M}$, and
\[
  \CC:=\ker\D=\{\,F(I_1,\dots,I_n)\,\}
\]
is a subfield, the \emph{field of first integrals}.  Every statement below is
local in this sense.  (In the smooth category $\ker\D$ is only a ring, and the
Wronskian criterion used below fails; this is why we work analytically.)

\begin{definition}\label{def:var}
The \emph{variational equation} of~\eqref{eq:nODE} is
\begin{equation}\label{eq:varn}
  L(Q) := \D^n(Q) - \sum_{k=0}^{n-1}\varphi_k\,\D^k(Q) = 0,
\end{equation}
and $\VV:=\{Q\in\mathcal{M}:L(Q)=0\}$, a $\CC$-vector space.  Writing
$\jj(Q)=(Q,\D Q,\dots,\D^{n-1}Q)^\top$, $L(Q)=0$ is $\D\jj(Q)=B\,\jj(Q)$ with
the companion matrix $B$: $B_{k,k+1}=1$ $(k\le n-2)$, $B_{n-1,k}=\varphi_k$.
\end{definition}

Characteristics $Q=\eta-\xi y'$ of Lie point symmetries, and more generally of
evolutionary symmetries, lie in $\VV$.

\begin{lemma}\label{lem:dimV}
$\dim_\CC\VV=n$.  Moreover $Q_1,\dots,Q_m\in\VV$ are $\CC$-linearly independent
if and only if the $m\times n$ matrix $[\D^j(Q_i)]$ has rank $m$.
\end{lemma}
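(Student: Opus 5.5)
The plan is to prove the Wronskian criterion first and then get the dimension count from the flow-box coordinates.

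\emph{Step 1: the rank criterion.} One direction is immediate. Suppose $\sum_i c_iQ_i=0$ with $c_i\in\CC$ not all zero. Since $\D c_i=0$, applying $\D^j$ gives $\sum_i c_i\D^j(Q_i)=0$ for every $j$. So the rows of $[\D^j(Q_i)]$ are $\CC$-dependent, hence $\mathcal M$-dependent, and the rank is $<m$.

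For the converse, assume the rank is $<m$. Choose a minimal set of rows that is $\mathcal M$-dependent; after renumbering these are $Q_1,\dots,Q_r$. Then
\[
  \sum_{i=1}^{r} a_i\,\D^j(Q_i)=0,\qquad j=0,\dots,n-1,
\]
with $a_i\in\mathcal M$, where minimality allows us to normalise $a_r=1$ and forces the relation to be unique up to scaling. Now apply $\D$ to the relation for $j\le n-2$. For $j=n-1$, first use $L(Q_i)=0$ to write $\D^n Q_i$ as an $\mathcal M$-combination of lower derivatives; the relations for $j\le n-1$ then kill it. In both cases we obtain
\[
  \sum_i \D(a_i)\,\D^j(Q_i)+\sum_i a_i\,\D^{j+1}(Q_i)=0,
\]
and the second sum vanishes in every case, the last one by the companion-matrix identity $\D\jj(Q)=B\jj(Q)$. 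Hence $\sum_i\D(a_i)\jj(Q_i)=0$ is another relation among the same $r$ rows, and its last coefficient is $\D(a_r)=0$. By minimality all $\D(a_i)=0$, so $a_i\in\CC$. This gives a genuine $\CC$-linear dependence $\sum a_iQ_i=0$ with $a_r=1$. The only delicate point here is the use of $L(Q_i)=0$ to close the system at $j=n-1$, and that is exactly the companion-matrix form.

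\emph{Step 2: the dimension.} The criterion gives $\dim_\CC\VV\le n$ at once, because $n+1$ vectors $\jj(Q_i)\in\mathcal M^n$ are always $\mathcal M$-dependent.

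For the lower bound we need $n$ solutions whose jet vectors $\jj(Q_i)$ are $\mathcal M$-independent. Work in the flow-box coordinates $(x,I_1,\dots,I_n)$, in which $\D=\partial_x$. The equation $\D\jj=B\jj$ is then a linear holomorphic ODE system in $x$ depending holomorphically on the parameters $I\in N$. Solve it with initial data $\jj=e_i$ at a base point $x_0\in J$. By the existence theorem with parameters (possibly after shrinking $U$ to a product neighbourhood where $B$ is holomorphic, which is harmless since everything is local), this gives holomorphic solutions. The resulting fundamental matrix has Wronskian equal to $1$ at $x=x_0$, so it is invertible in $\mathcal M$.

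We must also check that each solution vector really has the form $\jj(Q)$. Its first component $Q$ satisfies $\D^kQ=$ the $k$-th component, by the superdiagonal entries of $B$, and then $L(Q)=0$ follows from the last row of $B$.

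\emph{Main obstacle.} The one caveat is the one the paper itself flags. Meromorphy of $\varphi$ may force $B$ to have poles, so we should pick $U$ (or shrink it) where $\varphi$ is holomorphic. Alternatively we could argue more intrinsically: the $x$-derivatives of the $n$ functions $\partial_{I_i}$ of the solution yield solutions via the variational interpretation, namely $Q_i=\partial z_0/\partial I_i$ in flow-box coordinates. Differentiating the identity $\partial_x^n z_0=\varphi$ with respect to $I_i$ gives $L(Q_i)=0$. The Jacobian of $(z_0,\dots,z_{n-1})$ with respect to $(I_1,\dots,I_n)$ is nonzero because the coordinate change is a biholomorphism. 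I would use this second construction, since it avoids shrinking $U$ and is entirely explicit.
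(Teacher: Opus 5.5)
Your proof is correct, but it reaches the lemma by a somewhat different route from the paper's.

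\textbf{The paper's route.} It works dimension-first. Lemma~\ref{lem:vk} gives $\D(V)=-VB$ for $V=[\partial I_i/\partial z_k]$, so the columns of $W=V^{-1}$ are jets $\jj(Q^*_j)$ of solutions. Spanning is shown directly: for any $Q\in\VV$ the vector $c:=V\jj(Q)$ is constant. The rank statement is then quoted from the scalar Wronskian criterion over $(\mathcal M,\D)$.

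\textbf{Your route.} You prove the rank criterion first, by the minimal-relation argument in its vector form, where $L(Q_i)=0$ closes the system at $j=n-1$. From it you read off $\dim_\CC\VV\le n$. The lower bound comes from $Q_i=\partial z_0/\partial I_i$ in flow-box coordinates. These are in fact the paper's $Q^*_i$: since $\D=\partial_x$ there, $\D^kQ_i=\partial z_k/\partial I_i$, and $\partial z/\partial I$ is exactly $V^{-1}$. The difference is how $L(Q_i)=0$ is obtained: you differentiate the general solution with respect to the constants of integration, rather than using the adjoint system.

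\textbf{What each buys.} Your argument is self-contained, with no appeal to the Wronskian lemma. It also shows that $Q_i$ is the characteristic of the coordinate field $\partial_{I_i}$ on the leaf space, which is the content of Theorem~\ref{thm:intrinsic}(iii). The paper's argument reuses Lemma~\ref{lem:vk}, which it needs anyway. Its spanning step also gives the coordinates $c_i=\Pi(I_i,Q)$ of every $Q$ in the basis, i.e.\ the duality $\Pi(I_i,Q^*_j)=\delta_{ij}$ used later.

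You are right to discard your first construction. Shrinking $U$ changes $\mathcal M$ and $\CC$, so it is not harmless. Also, on a non-simply-connected $J$ the existence theorem alone does not guarantee single-valued solutions on all of $U$, unless you already know a global fundamental matrix such as $V^{-1}$.
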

\begin{proof}
Let $V$ be the matrix $V_{ik}=\partial I_i/\partial z_k$, which is invertible on
$U$ since $\det V$ is the Jacobian of the change of coordinates.  By
Lemma~\ref{lem:vk} below (whose proof does not use this lemma), $\D(V)=-VB$, so $W:=V^{-1}$ satisfies
$\D(W)=BW$.  Because $B$ is a companion matrix, each column of $W$ is
$\jj(Q^*_j)$ for its top entry $Q^*_j$, and $Q^*_j\in\VV$.  If $u$ solves
$\D u=Bu$ then $c:=Vu$ satisfies $\D(c)=-VBu+VBu=0$, so $c\in\CC^n$ and
$u=Wc$; hence $Q^*_1,\dots,Q^*_n$ is a basis of $\VV$.  For the rank
statement: a nontrivial $\CC$-relation $\sum c_iQ_i=0$ gives the same relation
among the rows $\jj(Q_i)^\top$; conversely, if $Q_1,\dots,Q_m$ are
$\CC$-independent, their $m\times m$ Wronskian is nonzero by the Wronskian
criterion over the differential field $(\mathcal{M},\D)$, whose field of
constants is $\CC$~\cite[Lemma~1.12]{vdPutSinger}.
\end{proof}

\subsection{The case \texorpdfstring{$n=2$}{n=2}}\label{ssec:two}

For $y''=\varphi(x,y,z)$ and a first integral $I$ with $I_z\ne0$ put $R=I_z$
and $S=I_y/I_z$.  Differentiating $\D(I)=0$ gives
\begin{equation}\label{eq:DS}
  \D(S) = S^2+\varphi_zS-\varphi_y,\qquad \D(R)=-(S+\varphi_z)R .
\end{equation}
The substitution $S=-\D(Q)/Q$ turns the first equation into the variational
equation $\D^2Q-\varphi_z\D Q-\varphi_yQ=0$~\cite[eqs.~(10)--(11)]{Mohanasubha2014},
so the solutions of the $S$-equation are $\PP(\VV)$: $Q\mapsto-\D(Q)/Q$ is
constant exactly on the orbits of $\CC^\times$, and every solution arises
from some $Q$ (Theorem~\ref{thm:plucker} with $n=2$).  With $\VV=\CC Q_1\oplus\CC Q_2$ and $S_i=-\D(Q_i)/Q_i$,
every solution is
\[
  S=\frac{c_1Q_1S_1+c_2Q_2S_2}{c_1Q_1+c_2Q_2},\qquad [c_1:c_2]\in\PP^1(\CC),
\]
so the cross-ratio of four solutions lies in $\CC$.  The $\lambda$-symmetries
of the equation with respect to $\partial_y$ are $\lambda=-S$~\cite{MurielRomero2001,Mohanasubha2014},
and for two independent first integrals
$\partial(I_1,I_2)/\partial(y,z)=R_1R_2(S_1-S_2)$ is a Jacobi last multiplier.
None of this is new; the question of this paper is which of these features
persist when $n\ge3$.

\section{The adjoint system and its scalar form}\label{sec:adjoint}

\begin{lemma}\label{lem:vk}
Let $I$ be a first integral of~\eqref{eq:nODE} and $v_k:=I_{z_k}$
$(k=0,\dots,n-1)$, $v_{-1}:=0$.  Then
\begin{equation}\label{eq:vsys}
  \D(v_k) = -v_{k-1} - \varphi_k\,v_{n-1},\qquad k=0,\dots,n-1,
\end{equation}
that is, $\D(v)=-B^\top v$ for $v=\nabla_zI$: the jet gradient solves the
adjoint of the variational equation.  Consequently, for every $Q\in\VV$,
\begin{equation}\label{eq:pairing}
  \Pi(I,Q):=\sum_{k=0}^{n-1}I_{z_k}\,\D^k(Q)\in\CC,\qquad
  \text{and in general}\quad \D\,\Pi(I,Q)=I_{z_{n-1}}\,L(Q).
\end{equation}
\end{lemma}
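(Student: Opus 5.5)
The plan is to differentiate the identity $\D(I)=0$ with respect to each jet variable $z_k$, and then commute $\partial_{z_k}$ past $\D$. Write $\D=\partial_x+\sum_{j=0}^{n-2}z_{j+1}\partial_{z_j}+\varphi\,\partial_{z_{n-1}}$. The commutator of $\partial_{z_k}$ with $\D$ is obtained by differentiating the coefficients of $\D$:
\[
  [\partial_{z_k},\D]=\partial_{z_{k-1}}\ (\text{absent for }k=0)\;+\;\varphi_k\,\partial_{z_{n-1}} .
\]
The first term comes from the coefficient $z_k$ of $\partial_{z_{k-1}}$, and the second from the coefficient $\varphi$ of $\partial_{z_{n-1}}$. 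Applying this to $I$ gives
\[
  0=\partial_{z_k}\D(I)=\D(I_{z_k})+I_{z_{k-1}}+\varphi_k I_{z_{n-1}} ,
\]
which is exactly \eqref{eq:vsys} with the convention $v_{-1}=0$. Reading off the matrix form, note that $(B^\top v)_k=v_{k-1}+\varphi_k v_{n-1}$, because $B_{k-1,k}=1$ and $B_{n-1,k}=\varphi_k$. Hence $\D v=-B^\top v$.

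For the pairing, write $\Pi(I,Q)=v^\top\jj(Q)$ and compute with the Leibniz rule. For arbitrary $Q$ we have $\D\jj(Q)=B\jj(Q)+L(Q)\,e_{n-1}$. This holds because the first $n-1$ rows are tautological, and the last row is $\D^n Q=\sum\varphi_k\D^kQ+L(Q)$. Then
\[
  \D\Pi=(\D v)^\top\jj(Q)+v^\top\D\jj(Q)=-v^\top B\jj(Q)+v^\top B\jj(Q)+v_{n-1}L(Q)=I_{z_{n-1}}L(Q).
\]
For $Q\in\VV$ this vanishes, so $\Pi(I,Q)\in\ker\D=\CC$.

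No step is a real obstacle. The only point needing care is the commutator computation. One must check the index bookkeeping: $\partial_{z_k}$ hits the coefficient $z_{j+1}$ only when $j=k-1$, so the case $k=0$ gives no such term, which matches $v_{-1}=0$. One must also note that $\partial_{z_k}$ does not act on $\partial_x$. Since everything is meromorphic on $U$ and the identities are polynomial in derivatives, they hold as identities in $\mathcal M$. The proof also does not use Lemma~\ref{lem:dimV}, as required there.
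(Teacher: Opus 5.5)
Your proof is correct and is the paper's argument. The commutator $[\partial_{z_k},\D]=\partial_{z_{k-1}}+\varphi_k\partial_{z_{n-1}}$ does the same bookkeeping as the paper's term-by-term differentiation of $\D(I)=0$ in $z_k$. For the pairing you package the pairwise cancellation as $\D\jj(Q)=B\jj(Q)+L(Q)e_{n-1}$ and $\D v=-B^\top v$, which is the same computation in matrix form.
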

\begin{proof}
Differentiate $I_x+\sum_{j\le n-2}z_{j+1}I_{z_j}+\varphi I_{z_{n-1}}=0$ with
respect to $z_k$; the terms not of the form $\D(I_{z_k})$ are
$I_{z_{k-1}}$ (absent for $k=0$) and $\varphi_kI_{z_{n-1}}$.
For~\eqref{eq:pairing},
$\D\Pi=\sum_k(-v_{k-1}-\varphi_kv_{n-1})\D^kQ+\sum_kv_k\D^{k+1}Q
=v_{n-1}L(Q)$, the other terms cancelling in pairs.
\end{proof}

Identity~\eqref{eq:pairing} is the Lagrange identity for the pair
$(L,L^*)$; that adjoint symmetries and symmetries pair to first integrals is
classical (see~\cite{AncoBluman1998} and~\cite[\S2]{Mohanasubha2016}).

\begin{definition}\label{def:Svector}
For a first integral $I$ with $I_{z_{n-1}}\ne0$, its \emph{integrating
factor} and \emph{$S$-vector} are
\begin{equation}\label{eq:RS}
  R := I_{z_{n-1}},\qquad
  S_k := I_{z_k}/R\quad(k=0,\dots,n-2),
\end{equation}
with $S_{n-1}:=1$, $S_{-1}:=0$ and \emph{principal component}
$\sigma:=S_{n-2}$.
\end{definition}

In~\cite{Mohanasubha2016} the null forms are defined by
$I_x=-RS_1$, $I_{x^{(i)}}=-RS_{i+1}$, $I_{x^{(n-1)}}=-R$; after the index shift
the $S$ agree with ours and only $R$ changes sign.

\begin{proposition}[The $S$-system]\label{prop:Ssys}
With $R$, $S_k$ as in~\eqref{eq:RS},
\begin{align}
  \D(R)   &= -(\sigma+\varphi_{n-1})\,R, \label{eq:DRn}\\
  \D(S_k) &= S_k\,\sigma + \varphi_{n-1}S_k - S_{k-1} - \varphi_k,
             \qquad k=0,\dots,n-2. \label{eq:Ssys}
\end{align}
\end{proposition}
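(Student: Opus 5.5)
The plan is to derive both identities directly from Lemma~\ref{lem:vk}, which gives $\D(v_k)=-v_{k-1}-\varphi_k v_{n-1}$ for $v_k=I_{z_k}$, $v_{-1}=0$. Everything then reduces to rewriting these equations in the normalised variables $R=v_{n-1}$ and $S_k=v_k/R$. No new differentiation of $\D(I)=0$ is needed; the content is the quotient rule applied to~\eqref{eq:vsys}.

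First take $k=n-1$ in~\eqref{eq:vsys}. This gives $\D(R)=-v_{n-2}-\varphi_{n-1}R$. Since $v_{n-2}=\sigma R$ by Definition~\ref{def:Svector}, we get $\D(R)=-(\sigma+\varphi_{n-1})R$, which is~\eqref{eq:DRn}. For $n=2$ this reproduces the second equation of~\eqref{eq:DS}, which serves as a check.

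Next, for $0\le k\le n-2$, write $v_k=S_kR$ and apply the quotient rule:
\[
\D(S_k)=\frac{\D(v_k)}{R}-S_k\frac{\D(R)}{R}.
\]
Substituting~\eqref{eq:vsys} gives $\D(v_k)/R=-S_{k-1}-\varphi_k$, using $v_{k-1}/R=S_{k-1}$. This holds also for $k=0$ by the convention $S_{-1}=0=v_{-1}$. Substituting~\eqref{eq:DRn} gives $-\D(R)/R=\sigma+\varphi_{n-1}$. Combining the two yields $\D(S_k)=S_k\sigma+\varphi_{n-1}S_k-S_{k-1}-\varphi_k$, which is~\eqref{eq:Ssys}.

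There is no real obstacle here. The only points to watch are the boundary conventions: the $k=0$ case uses $v_{-1}=0$, and the $k=n-2$ case involves $S_{n-3}$, or $S_{-1}=0$ when $n=2$. Everything takes place in the field $\mathcal{M}$ of meromorphic functions on $U$, so division by $R\ne0$ is legitimate. As a final check, the case $n=2$, $k=0$ should give $\D(S)=S^2+\varphi_zS-\varphi_y$, matching~\eqref{eq:DS}.
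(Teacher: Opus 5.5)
Your proposal is correct and is essentially the paper's proof: take \eqref{eq:vsys} at $k=n-1$ and divide by $R$ to get \eqref{eq:DRn}, then apply the quotient rule to $S_k=v_k/R$ and substitute \eqref{eq:vsys} and \eqref{eq:DRn}. Your remarks on the boundary conventions $v_{-1}=S_{-1}=0$ and on $R\ne0$ are the only details the paper leaves implicit.
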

\begin{proof}
\eqref{eq:DRn} is~\eqref{eq:vsys} for $k=n-1$ divided by $R$.  Then
$\D(S_k)=\D(v_k)/R-S_k\D(R)/R=-S_{k-1}-\varphi_k+S_k(\sigma+\varphi_{n-1})$.
\end{proof}

For $n=2$, \eqref{eq:Ssys} is the $S$-equation~\eqref{eq:DS}.  Read upwards
from $S_{n-1}=1$, the system determines every component from $\sigma$.

\begin{theorem}[Scalar form]\label{thm:collapse}
Define $\widehat S_{n-1}:=1$ and, for $k=n-1,\dots,0$,
\begin{equation}\label{eq:recursion}
  \widehat S_{k-1} := -\D\bigl(\widehat S_k\bigr)
     + \widehat S_k\,\sigma + \varphi_{n-1}\widehat S_k - \varphi_k .
\end{equation}
\begin{enumerate}[label=\textup{(\roman*)},nosep,leftmargin=*]
\item $\widehat S_{n-2}=\sigma$, and $\widehat S_k$ is a differential polynomial
in $\sigma$ of degree $n-1-k$ and order $n-2-k$.
\item $(S_0,\dots,S_{n-2})$ solves~\eqref{eq:Ssys} if and only if
$S_k=\widehat S_k$ for all $k$ and
\begin{equation}\label{eq:closing}
  \Ef_n(\sigma) := \widehat S_{-1} = 0,
\end{equation}
an equation of order $n-1$ and degree $n$ in $\sigma$, quasi-linear in
$\D^{n-1}\sigma$.
\item If $R\ne0$ and $\sigma=-\D(R)/R-\varphi_{n-1}$, then
$\widehat S_kR=v_k$, where $v_{n-1}=R$ and $v_{k-1}=-\D(v_k)-\varphi_kR$, and
\begin{equation}\label{eq:adjointRiccati}
  R\,\Ef_n(\sigma)=L^*(R):=(-1)^n\D^nR-\sum_{k=0}^{n-1}(-1)^k\D^k(\varphi_kR).
\end{equation}
Thus $\Ef_n(\sigma)=0$ is the Riccati form of the adjoint variational
equation $L^*(R)=0$.
\end{enumerate}
\end{theorem}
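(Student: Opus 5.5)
The plan is to prove (i) by downward induction on $k$, (ii) by rearranging \eqref{eq:Ssys} into the recursion \eqref{eq:recursion}, and (iii) by comparing $\widehat S_kR$ with the adjoint recursion.

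\emph{Part (i).} First, $\widehat S_{n-2}=-\D(1)+\sigma+\varphi_{n-1}-\varphi_{n-1}=\sigma$. Suppose $\widehat S_k$ is a differential polynomial in $\sigma$ of degree $d_k=n-1-k$ and order $o_k=n-2-k$, with coefficients in $\mathcal M$. The term $\widehat S_k\sigma$ raises the degree by one. The term $-\D(\widehat S_k)$ raises the order by one and does not raise the degree; its coefficients get differentiated, which stays in $\mathcal M$. The remaining terms have lower degree and order. Hence $d_{k-1}=d_k+1$ and $o_{k-1}=o_k+1$. I would track the leading monomials explicitly to make sure nothing cancels. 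The top-degree part of $\widehat S_k$ is $\sigma^{n-1-k}$, since the top-degree part is multiplied by $\sigma$ at each step and $\D$ does not raise degree. The top-order part of $\widehat S_k$ is $(-1)^{n-2-k}\D^{n-2-k}\sigma$, with coefficient $\pm1$, coming from $-\D$ applied to the previous top-order term. These claims hold for $k\ge0$ and also for $k=-1$, which gives the degree $n$ and order $n-1$ asserted in (ii). Quasi-linearity in $\D^{n-1}\sigma$ follows because $\D^{n-1}\sigma$ enters $\widehat S_{-1}$ only through $-\D$ of the term $\pm\D^{n-2}\sigma$, so its coefficient is the constant $\pm1$.

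\emph{Part (ii).} Equation \eqref{eq:Ssys} can be rewritten as $S_{k-1}=-\D(S_k)+S_k\sigma+\varphi_{n-1}S_k-\varphi_k$ for $k=0,\dots,n-2$, where $\sigma=S_{n-2}$ and $S_{-1}=0$. The same identity also holds trivially for $k=n-1$ with $S_{n-1}=1$. Indeed, it then reads $S_{n-2}=\sigma$, which is the definition of $\sigma$. So the system \eqref{eq:Ssys} is the statement that the sequence $S_{n-1},\dots,S_{-1}$ satisfies the recursion \eqref{eq:recursion}, with $S_{n-1}=1$ and terminal value $S_{-1}=0$. By induction downward from $k=n-1$ this forces $S_k=\widehat S_k$, and the terminal condition becomes $\widehat S_{-1}=0$. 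Conversely, if $\Ef_n(\sigma)=0$, set $S_k=\widehat S_k$; then \eqref{eq:recursion} is exactly \eqref{eq:Ssys}.

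\emph{Part (iii).} Set $\sigma=-\D R/R-\varphi_{n-1}$ and $w_k=\widehat S_kR$. Then $w_{n-1}=R=v_{n-1}$. I would show that $w$ satisfies the same recursion as $v$. Compute
\[
-\D(w_k)-\varphi_kR=-\D(\widehat S_k)R-\widehat S_k\D R-\varphi_kR .
\]
Since $-\D R=(\sigma+\varphi_{n-1})R$, this equals
\[
R\bigl(-\D\widehat S_k+\widehat S_k\sigma+\varphi_{n-1}\widehat S_k-\varphi_k\bigr)=R\widehat S_{k-1}=w_{k-1}.
\]
Hence $w_k=v_k$ for all $k$, including $k=-1$, so $R\,\Ef_n(\sigma)=v_{-1}$.

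It remains to identify $v_{-1}$ with $L^*(R)$. Unrolling $v_{k-1}=-\D v_k-\varphi_kR$ gives
\[
v_{k}=(-1)^{n-1-k}\D^{n-1-k}R-\sum_{j=k+1}^{n-1}(-1)^{j-k-1}\D^{j-k-1}(\varphi_jR).
\]
I would check this by a quick induction. At $k=-1$ the formula becomes $(-1)^n\D^nR-\sum_{j=0}^{n-1}(-1)^j\D^j(\varphi_jR)=L^*(R)$.

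The main obstacle is the bookkeeping in (i), namely confirming that the leading terms never cancel. The explicit leading monomials $\sigma^{n-1-k}$ and $\pm\D^{n-2-k}\sigma$ settle this.
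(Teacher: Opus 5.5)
Your proposal is correct and takes essentially the paper's route. Part (ii) is the $S$-system solved for $S_{k-1}$, with $S_{-1}=0$ as the closing condition; part (iii) multiplies the recursion by $R$ and unwinds $v_{k-1}=-\D(v_k)-\varphi_kR$ from $v_{n-1}=R$ to obtain $L^*(R)$; and your leading-term bookkeeping in (i) (top degree produced only by $\sigma\widehat S_k$, top order produced only by $-\D$ with a sign at each step) is the argument the paper places in Proposition~\ref{prop:shape}.
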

\begin{proof}
(i) and (ii): the case $k=n-1$ of~\eqref{eq:recursion} gives
$\widehat S_{n-2}=\sigma$, and~\eqref{eq:recursion} is~\eqref{eq:Ssys} solved
for $S_{k-1}$; the convention $S_{-1}=0$ is~\eqref{eq:closing}.  Degrees,
orders and quasi-linearity are Proposition~\ref{prop:shape}.
(iii) Multiplying~\eqref{eq:recursion} by $R$ and using
$\D(R)=-(\sigma+\varphi_{n-1})R$ gives
$\widehat S_{k-1}R=-\D(\widehat S_kR)-\varphi_kR$, so $v_k:=\widehat S_kR$
obeys the stated recursion, which is~\eqref{eq:vsys}.  Unwinding it from
$v_{n-1}=R$ gives $v_{-1}=(-\D)^nR-\sum_k(-\D)^k(\varphi_kR)=L^*(R)$.
\end{proof}

For $n=3$, \eqref{eq:closing} reads
\begin{equation}\label{eq:E3}
  \D^2\sigma = 3\sigma\D\sigma + 2\varphi_2\D\sigma - \sigma^3 - 2\varphi_2\sigma^2
      + \bigl(\varphi_1-\varphi_2^2+\D\varphi_2\bigr)\sigma
      + \varphi_0 + \varphi_1\varphi_2 - \D\varphi_1 ,
\end{equation}
which is~\cite[eq.~(3.40)]{CSL2006} with their $U$ equal to our $\sigma$.
(The term $-2\varphi_2\D U$ is missing from~(3.40) in the arXiv version
of~\cite{CSL2006}; substituting their~(3.39) into their~(2.6) restores it.)  The linearisation of the general-$n$ system used
in~\cite[eqs.~(2.4)--(2.5)]{Mohanasubha2016} is equivalent to~(iii).

\section{Solutions of the \texorpdfstring{$S$}{S}-system are hyperplanes}\label{sec:grassmann}

\begin{lemma}[Gauge invariance]\label{lem:gauge}
If $u=(u_0,\dots,u_{n-1})$ satisfies $\D(u)=-B^\top u+g\,u$ for some function
$g$, with $u_{n-1}\ne0$, then $S_k:=u_k/u_{n-1}$ solves~\eqref{eq:Ssys}.
\end{lemma}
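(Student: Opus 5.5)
The plan is a direct computation that mirrors the proof of Proposition~\ref{prop:Ssys}, with $u$ in the role of $v=\nabla_zI$ and $u_{n-1}$ in the role of $R$. The extra term $g\,u$ rescales every component by the same logarithmic derivative. Dividing by $u_{n-1}$ therefore removes it, exactly as the $\CC^\times$-scaling disappeared in the $n=2$ picture of \S\ref{ssec:two}.

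First, write the hypothesis componentwise. Since $(B^\top u)_k=u_{k-1}+\varphi_k u_{n-1}$ (with $u_{-1}:=0$), it reads
\[
  \D(u_k)=-u_{k-1}-\varphi_k u_{n-1}+g\,u_k,\qquad k=0,\dots,n-1,
\]
which is \eqref{eq:vsys} plus the term $g\,u_k$. For $k=n-1$ this gives
\[
  \D(u_{n-1})/u_{n-1}=-(\sigma+\varphi_{n-1})+g,
\]
where $\sigma:=u_{n-2}/u_{n-1}$. This is the analogue of \eqref{eq:DRn}; the case $n-2<0$ does not arise, since $n\ge2$.

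Second, apply the quotient rule for $k=0,\dots,n-2$:
\[
  \D(S_k)=\frac{\D(u_k)}{u_{n-1}}-S_k\frac{\D(u_{n-1})}{u_{n-1}}
  =-S_{k-1}-\varphi_k+g S_k+S_k(\sigma+\varphi_{n-1})-gS_k .
\]
The two $g$-terms cancel, leaving precisely \eqref{eq:Ssys}, with $S_{-1}=u_{-1}/u_{n-1}=0$ and $S_{n-1}=1$ consistent with Definition~\ref{def:Svector}.

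There is no real obstacle here. The only point needing care is the bookkeeping of the boundary conventions $u_{-1}=0$ and the identification $\sigma=S_{n-2}$, so that the $k=0$ equation and the $k=n-1$ normalisation match those in \eqref{eq:Ssys}. Optionally, one can remark that the lemma amounts to saying the substitution $u=e^{\int g}\,\tilde u$ reduces the hypothesis to $\D\tilde u=-B^\top\tilde u$, and ratios are unaffected. This viewpoint is only heuristic, since $\int g$ need not be meromorphic on $U$, so the direct computation above is the one I would present.
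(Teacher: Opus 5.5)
Your proposal is correct and is essentially the paper's proof: you write the hypothesis componentwise as $\D(u_k)=-u_{k-1}-\varphi_ku_{n-1}+gu_k$ and apply the quotient rule with $\sigma=u_{n-2}/u_{n-1}$. The two $g$-terms then cancel, leaving exactly \eqref{eq:Ssys}.
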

\begin{proof}
The hypothesis reads $\D(u_k)=-u_{k-1}-\varphi_ku_{n-1}+gu_k$.  With
$\sigma=u_{n-2}/u_{n-1}$,
$\D(S_k)=(-S_{k-1}-\varphi_k+gS_k)-S_k(-\sigma-\varphi_{n-1}+g)$,
which is~\eqref{eq:Ssys}.
\end{proof}

\begin{theorem}[Plücker description]\label{thm:plucker}
Let $Q_1,\dots,Q_{n-1}\in\VV$ be $\CC$-independent, let $\Theta$ be the
$(n-1)\times n$ matrix $\Theta_{ij}=\D^j(Q_i)$ $(j=0,\dots,n-1)$, and let
$m_k$ be the minor of $\Theta$ obtained by deleting column $k$.  Then
$m_{n-1}\not\equiv0$, and off its zero set
\begin{equation}\label{eq:plucker}
  S_k := (-1)^{k+n-1}\,\frac{m_k}{m_{n-1}},\qquad k=0,\dots,n-2,
\end{equation}
solves~\eqref{eq:Ssys} and satisfies
\begin{equation}\label{eq:annihilate}
  \sum_{k=0}^{n-1}S_k\,\D^k(Q)=0\qquad\text{for } Q\in H:=\langle
  Q_1,\dots,Q_{n-1}\rangle_\CC .
\end{equation}
Conversely every solution of~\eqref{eq:Ssys} arises in this way from exactly
one hyperplane $H\subset\VV$.  Hence
\begin{equation}\label{eq:GrS}
  \{\,\text{solutions of~\eqref{eq:Ssys}}\,\}\ =\ \PP(\VV^\vee)
  \ \cong\ \Gr(n-1,\VV).
\end{equation}
\end{theorem}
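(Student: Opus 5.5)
The plan is to build, from the $n-1$ characteristics, a vector $u$ satisfying the gauged adjoint system of Lemma~\ref{lem:gauge}. Then invert the construction using the basis of $\VV$ from Lemma~\ref{lem:dimV}.

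\emph{Forward direction.} Let $W_H$ be the $n\times n$ matrix whose first $n-1$ rows are the rows of $\Theta$, and whose last row is a formal row $(e_0,\dots,e_{n-1})$. Expanding $\det W_H$ along that row gives coefficients $u_k=(-1)^{k+n-1}m_k$. By construction $\sum_k u_k\D^k(Q_i)=0$ for each $i$, since the corresponding determinant has two equal rows. Equivalently, $u^\top\jj(Q_i)=0$, and by $\CC$-linearity the same holds for every $Q\in H$; this is \eqref{eq:annihilate}. To see $m_{n-1}\not\equiv0$: if $m_{n-1}\equiv0$, then the first $n-1$ columns $\jj$-truncations are dependent. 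I would argue this from Lemma~\ref{lem:dimV}, applied to the $(n-1)\times(n-1)$ Wronskian of $Q_1,\dots,Q_{n-1}$. That Wronskian is exactly $m_{n-1}$, and it is nonzero by the Wronskian criterion because the $Q_i$ are $\CC$-independent.

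Next I derive the evolution of $u$. Differentiating $u^\top\jj(Q_i)=0$ and using $\D\jj(Q_i)=B\jj(Q_i)$ gives
\[
(\D u+B^\top u)^\top\jj(Q_i)=0\quad\text{for all } i.
\]
The annihilator of the $n-1$ independent vectors $\jj(Q_i)$ over $\mathcal{M}$ is one-dimensional, since $\Theta$ has rank $n-1$. It is spanned by $u$, which is nonzero because $u_{n-1}=m_{n-1}\neq0$. Hence $\D u+B^\top u=g\,u$ for some $g\in\mathcal{M}$, and Lemma~\ref{lem:gauge} shows that $S_k=u_k/u_{n-1}$ solves \eqref{eq:Ssys}. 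The signs match \eqref{eq:plucker}. Replacing $Q_1,\dots,Q_{n-1}$ by another $\CC$-basis of $H$ multiplies all $m_k$ by the same determinant, so $S$ depends only on $H$.

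\emph{Converse and uniqueness.} Given a solution $S$ with $S_{n-1}=1$, I would run the computation of Lemma~\ref{lem:gauge} backwards: \eqref{eq:Ssys} says $\D S=-B^\top S+(\sigma+\varphi_{n-1})S$. For $Q\in\VV$, differentiating the pairing gives
\[
\D\Bigl(\sum_k S_k\D^kQ\Bigr)=(\sigma+\varphi_{n-1})\sum_k S_k\D^kQ,
\]
where the cancellations are those in \eqref{eq:pairing}. Now fix a basis $Q^*_1,\dots,Q^*_n$ of $\VV$ and put $p_j:=\sum_kS_k\D^kQ^*_j$. All the $p_j$ satisfy the same first-order linear equation, so the ratios $p_j/p_{j_0}$ lie in $\CC$ for any $p_{j_0}\neq0$. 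Moreover not all $p_j$ vanish, because the matrix $[\D^kQ^*_j]$ is invertible, being $W$ from Lemma~\ref{lem:dimV}. Thus $Q\mapsto\sum_kS_k\D^kQ$ equals $p_{j_0}$ times a nonzero $\CC$-linear functional $\ell$ on $\VV$, and $H:=\ker\ell$ is a hyperplane. Applying the forward construction to a basis of $H$ produces a solution $S'$ annihilating $\jj(H)$. Both $S$ and $S'$ are normalised by last entry $1$ and lie in the same one-dimensional annihilator, so $S=S'$.

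For uniqueness, suppose $S$ arose from two hyperplanes $H$ and $H'$. Then $S$ would annihilate $\jj(H+H')=\jj(\VV)$, which is impossible since $\ell\neq0$. This gives the bijection \eqref{eq:GrS}. The main obstacle is the rank and annihilator argument over $\mathcal{M}$: I must make sure the Wronskian criterion from Lemma~\ref{lem:dimV} gives rank exactly $n-1$, and in particular that the minor deleting column $n-1$ is itself nonzero, rather than merely some minor being nonzero.
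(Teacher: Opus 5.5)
Your proof is correct, and most of it follows the paper. The shared steps are: the Wronskian criterion for $m_{n-1}\not\equiv0$; the cofactor expansion for \eqref{eq:annihilate}; Lemma~\ref{lem:gauge}; the converse through the functional $Q\mapsto\sum_kS_k\D^kQ$ evaluated on the basis $Q^*_j$; and uniqueness by comparing annihilators.

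The genuine difference is how you get the evolution law for $u$. The paper treats $u$ as the vector of signed maximal minors of $\Theta$, i.e.\ an element of $\Lambda^{n-1}$. It then uses that the derivation induced there by $B$ is $(\operatorname{tr}B)\,\mathrm{Id}-B^\top$, which gives $\D u=-B^\top u+\varphi_{n-1}u$ with an explicit gauge. You instead observe that $\D u+B^\top u$ annihilates the same $\mathcal{M}$-independent vectors $\jj(Q_1),\dots,\jj(Q_{n-1})$ as $u$, so it lies on the line spanned by $u$. This yields $\D u+B^\top u=g\,u$ with $g$ unidentified, which is all Lemma~\ref{lem:gauge} needs.

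What each route buys: yours is more elementary and avoids the exterior-algebra identification and its sign bookkeeping. The paper's gives the value $g=\operatorname{tr}B=\varphi_{n-1}$. From that value one reads off $\sigma=-\D(m_{n-1})/m_{n-1}$. One also sees that $m_{n-1}$ times a last multiplier is an adjoint symmetry attached to $H$; for instance $m_{n-1}\det V$, since $\D(\det V)=-\varphi_{n-1}\det V$.

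Your remark that a change of $\CC$-basis of $H$ rescales all $m_k$ by one common determinant makes explicit a well-definedness the paper leaves implicit. The worry in your last sentence is already settled by your own argument. The Wronskian criterion gives nonvanishing of the particular minor $m_{n-1}$, not merely of some minor, and this is exactly how Lemma~\ref{lem:dimV} obtains its rank statement.
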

\begin{proof}
$m_{n-1}$ is the $\D$-Wronskian of $Q_1,\dots,Q_{n-1}$, which is not
identically zero by Lemma~\ref{lem:dimV}.  Let $u_k:=(-1)^km_k$.  For each
$i$, $\sum_ku_k\D^k(Q_i)$ is the determinant of $\Theta$ with the row
$\jj(Q_i)^\top$ added, hence zero; this gives~\eqref{eq:annihilate}.  The rows
of $\Theta$ evolve by $\D\jj(Q_i)=B\jj(Q_i)$, so the vector $u$ of signed
maximal minors evolves by the induced derivation on $\Lambda^{n-1}$, which in
these coordinates is $(\operatorname{tr}B)\,\mathrm{Id}-B^\top$:
\[
  \D(u)=-B^\top u+\varphi_{n-1}u .
\]
Lemma~\ref{lem:gauge} then gives~\eqref{eq:Ssys}.

Conversely let $S$ solve~\eqref{eq:Ssys}, put $u_k:=S_k$, $u_{n-1}:=1$, and
$g:=\sigma+\varphi_{n-1}$; then $\D(u)=-B^\top u+gu$.  For
$p(Q):=\sum_ku_k\D^k(Q)$ the computation of Lemma~\ref{lem:vk} gives
$\D\,p(Q)=g\,p(Q)+u_{n-1}L(Q)=g\,p(Q)$ on $\VV$.  The map $p:\VV\to\mathcal{M}$
is $\CC$-linear.  It is not zero: for a basis $Q^*_1,\dots,Q^*_n$ of $\VV$,
$(p(Q^*_j))_j=W^\top u$ with $W$ the invertible matrix of
Lemma~\ref{lem:dimV}, and $u\ne0$.  If $p(Q_0)\ne0$ then
$\D\bigl(p(Q)/p(Q_0)\bigr)=0$, so $p(\VV)=\CC\,p(Q_0)$ and $H:=\ker p$ is a
hyperplane.  Applying the first part to a basis of $H$ returns a covector
annihilating $H$ and normalised by $u_{n-1}=1$, hence $S$.  Two hyperplanes
giving the same $S$ have the same annihilator, so they coincide.
\end{proof}

For $n=2$, \eqref{eq:plucker} is $S_0=-\D(Q)/Q$ and~\eqref{eq:GrS} is the
statement of Section~\ref{ssec:two}.  The chart $S_{n-1}=1$ excludes no point
of $\Gr(n-1,\VV)$, since $m_{n-1}\not\equiv0$ for every hyperplane; it is the
zero set of $m_{n-1}$ (equivalently of $R$) inside $U$ that is excluded.

\begin{proposition}\label{prop:Rnonzero}
If $I$ is a first integral with $I_{z_{n-1}}\equiv0$, then $I$ is constant.
\end{proposition}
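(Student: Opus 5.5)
The plan is to use the adjoint system of Lemma~\ref{lem:vk} and propagate the vanishing of $v_{n-1}=I_{z_{n-1}}$ down the chain. The system~\eqref{eq:vsys} reads $\D(v_k)=-v_{k-1}-\varphi_kv_{n-1}$ for $k=0,\dots,n-1$, with $v_{-1}=0$.

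First suppose $v_{n-1}\equiv0$ on $U$. Then the equation for $k=n-1$ gives $0=\D(v_{n-1})=-v_{n-2}$, so $v_{n-2}\equiv0$. With $v_{n-1}=0$ the system becomes simply $\D(v_k)=-v_{k-1}$. A descending induction on $k$ then gives $v_{k-1}=-\D(v_k)=0$ for $k=n-2,\dots,1$. Hence $I_{z_k}\equiv0$ for all $k=0,\dots,n-1$.

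Next, $\D(I)=0$ reads $I_x+\sum_{j\le n-2}z_{j+1}I_{z_j}+\varphi I_{z_{n-1}}=0$, and all the $z$-derivative terms vanish, so $I_x=0$ as well. All partial derivatives of $I$ in the coordinates $(x,z_0,\dots,z_{n-1})$ are therefore zero on the connected domain $U$. So $I$ is locally constant, hence constant.

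Two small points need care. The first is that "$\equiv0$" should be read as vanishing on all of $U$ (or on an open subset, extended by the identity theorem, since $U$ is connected and everything is meromorphic). The second is that $I$ is meromorphic rather than holomorphic. I would handle this by working on the complement of the polar set, which is connected and dense, and observing that a meromorphic function that is constant there is constant. Neither point should be a real obstacle. The only substantive step is the descending induction, which is immediate from~\eqref{eq:vsys}.

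As a cross-check, there is a linear-algebra route via the proof of Lemma~\ref{lem:dimV}. There, $v=\nabla_zI$ solves $\D v=-B^\top v$, so $W^\top v$ is $\D$-constant. The condition $v_{n-1}=0$ then forces $v=0$ through the companion structure, which is the same argument in another form.
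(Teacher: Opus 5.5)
Your proof is correct and is essentially the paper's own argument: from $v_{n-1}\equiv0$, the $k=n-1$ case of~\eqref{eq:vsys} gives $v_{n-2}\equiv0$, descent gives $v_k\equiv0$ for all $k$, and then $\D(I)=0$ forces $I_x=0$. Your remarks on connectedness and the polar set only make explicit what the paper leaves implicit.
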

\begin{proof}
If $v_{n-1}\equiv0$, \eqref{eq:vsys} for $k=n-1$ gives $v_{n-2}\equiv0$, and
descending $v_k\equiv0$ for all $k$; then $\D(I)=0$ gives $I_x=0$.
\end{proof}

\section{Realisability}\label{sec:frobenius}

Theorem~\ref{thm:plucker} describes all solutions of the $S$-system; it does
not say that they are $S$-vectors of first integrals.

\begin{definition}\label{def:theta}
For any meromorphic $(S_0,\dots,S_{n-2})$, with $S_{n-1}:=1$, let
\begin{equation}\label{eq:theta}
  \theta_S := \sum_{k=0}^{n-1}S_k\,\mathrm{d}z_k
   \;-\;\Bigl(\sum_{j=0}^{n-2}z_{j+1}S_j+\varphi\Bigr)\mathrm{d}x ,
\end{equation}
the unique $1$-form with these $z$-components and $\theta_S(\vv)=0$.  We call
$S$ \emph{realisable} if, near every point of a dense open subset of $U$, it is
the $S$-vector of a first integral.
\end{definition}

If $S$ is the $S$-vector of $I$ then $\theta_S=\mathrm{d}I/R$.

\begin{lemma}\label{lem:Lie}
For any $S$, the Lie derivative of $\theta_S$ along $\vv$ is
\[
  (\Lie_\vv\theta_S)_{z_k}=\D(S_k)+S_{k-1}+\varphi_k\ \ (k\le n-2),\qquad
  (\Lie_\vv\theta_S)_{z_{n-1}}=\sigma+\varphi_{n-1},\qquad
  (\Lie_\vv\theta_S)(\vv)=0 .
\]
Hence $S$ solves the $S$-system~\eqref{eq:Ssys} if and only if
\begin{equation}\label{eq:Lietheta}
  \Lie_\vv\theta_S=(\sigma+\varphi_{n-1})\,\theta_S .
\end{equation}
\end{lemma}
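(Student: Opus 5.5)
We compute $\Lie_\vv\theta_S$ by Cartan's formula, $\Lie_\vv\theta_S=\mathrm{d}(\theta_S(\vv))+\iota_\vv\mathrm{d}\theta_S$. Since $\theta_S(\vv)=0$ by construction, this reduces to $\Lie_\vv\theta_S=\iota_\vv\mathrm{d}\theta_S$. The identity $(\Lie_\vv\theta_S)(\vv)=\mathrm{d}\theta_S(\vv,\vv)=0$ is then immediate. So it suffices to read off the $\mathrm{d}z_k$-components. The $\mathrm{d}x$-component is then forced by annihilation of $\vv$, because $\vv$ has $x$-component $1$.

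For the $z$-components we use the coordinate formula for the Lie derivative of a $1$-form. Writing $\theta_S=\sum_k S_k\,\mathrm{d}z_k+T\,\mathrm{d}x$ with $T=-\sum_{j\le n-2}z_{j+1}S_j-\varphi$, we have
\[
(\Lie_\vv\theta_S)_{z_k}=\D(S_k)+\sum_{j=0}^{n-1}S_j\,\partial_{z_k}(\vv^{z_j})+T\,\partial_{z_k}(\vv^x).
\]
Here $\vv^x=1$, $\vv^{z_j}=z_{j+1}$ for $j\le n-2$, and $\vv^{z_{n-1}}=\varphi$. The last term vanishes. The middle sum gives $S_{k-1}$ from $j=k-1$ (which is $0$ when $k=0$, by the convention $S_{-1}=0$), plus $S_{n-1}\varphi_k=\varphi_k$. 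This yields $\D(S_k)+S_{k-1}+\varphi_k$ for every $k$. For $k=n-1$, using $S_{n-1}=1$ so that $\D(S_{n-1})=0$, it gives $S_{n-2}+\varphi_{n-1}=\sigma+\varphi_{n-1}$.

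For the equivalence, both sides of \eqref{eq:Lietheta} annihilate $\vv$. A $1$-form annihilating $\vv$ is determined by its $z$-components. Hence \eqref{eq:Lietheta} holds if and only if the $z$-components agree. For $k=n-1$ both sides equal $\sigma+\varphi_{n-1}$ automatically, since $S_{n-1}=1$. For $k\le n-2$ the condition reads $\D(S_k)+S_{k-1}+\varphi_k=(\sigma+\varphi_{n-1})S_k$, which is exactly \eqref{eq:Ssys}.

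The only point requiring care is the sign and index bookkeeping in the Lie derivative formula. In particular, the term $\partial_{z_k}$ of the coefficient $T$ must not be included: it appears only in $\iota_\vv\mathrm{d}\theta_S$ paired with $\vv^x$-derivatives. The Cartan route double-checks this.
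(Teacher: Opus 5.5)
Your proof is correct and is essentially the paper's: the same coordinate formula $(\Lie_\vv\theta)_i=\vv(\theta_i)+\sum_j\theta_j\,\partial_i\vv^j$ gives the $z$-components, and the equivalence comes from comparing $z$-components of two forms that annihilate $\vv$. The vanishing on $\vv$ is also the same fact, obtained by you from Cartan's formula with $\theta_S(\vv)=0$ and by the paper from $\vv(\theta(\vv))-\theta([\vv,\vv])$; your closing remark about $T$ is garbled but harmless, since $\partial_{z_k}T$ does enter the $z_k$-component of $\iota_\vv\mathrm{d}\theta_S$ (multiplied by $\vv^x=1$) and is absorbed there via $\partial_{z_k}(\theta_S(\vv))=0$, which is why it is absent from the coordinate formula you used.
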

\begin{proof}
In coordinates $(\Lie_\vv\theta)_i=\vv(\theta_i)+\sum_j\theta_j\,\partial_i\vv^j$.
For $i=z_k$ the only nonzero $\partial_{z_k}\vv^j$ are $\partial_{z_k}\vv^{z_{k-1}}=1$
$(k\ge1)$ and $\partial_{z_k}\vv^{z_{n-1}}=\varphi_k$, which gives the first two
formulae.  The last is
$(\Lie_\vv\theta)(\vv)=\vv(\theta(\vv))-\theta([\vv,\vv])=0$.  Two $1$-forms that
vanish on $\vv$ and have the same $z$-components are equal, and
$(\sigma+\varphi_{n-1})\theta_S$ has $z$-components
$(\sigma+\varphi_{n-1})S_k$; comparing with~\eqref{eq:Ssys} gives the
equivalence.
\end{proof}

\begin{theorem}[Realisability]\label{thm:frobenius}
For meromorphic $(S_0,\dots,S_{n-2})$ the following are equivalent, locally
on the set where the $S_k$ are regular:
\begin{enumerate}[label=\textup{(\alph*)},nosep,leftmargin=*]
\item $S$ is realisable;
\item $\theta_S\wedge\mathrm{d}\theta_S=0$;
\item $S$ solves the $S$-system~\eqref{eq:Ssys}, and the $\binom n3$ components
$(\theta_S\wedge\mathrm{d}\theta_S)_{z_az_bz_c}$, $0\le a<b<c\le n-1$, vanish.
\end{enumerate}
\end{theorem}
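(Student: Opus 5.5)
I will prove (a)$\Rightarrow$(b)$\Rightarrow$(c)$\Rightarrow$(a), with Frobenius for (b)$\Rightarrow$(a) and Lemma~\ref{lem:Lie} for the passage between (b) and (c).

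\textbf{(a)$\Rightarrow$(b).} If locally $S$ is the $S$-vector of $I$, then $\theta_S=\mathrm{d}I/R$ with $R=I_{z_{n-1}}\ne0$ (Proposition~\ref{prop:Rnonzero}). Hence $\mathrm{d}\theta_S=-R^{-2}\mathrm{d}R\wedge\mathrm{d}I$, and $\theta_S\wedge\mathrm{d}\theta_S=0$ on a dense open set. By analyticity it vanishes wherever the $S_k$ are regular.

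\textbf{(b)$\Rightarrow$(a).} The form $\theta_S$ is nowhere zero, since its $\mathrm{d}z_{n-1}$-component is $1$. By the Frobenius theorem, near each regular point $\theta_S=h\,\mathrm{d}I$ for some holomorphic $I$ with $\mathrm{d}I\neq0$ and $h\ne0$. Then $\mathrm{d}I(\vv)=h^{-1}\theta_S(\vv)=0$, so $I$ is a first integral. Comparing $z$-components gives $I_{z_{n-1}}=1/h$ and $I_{z_k}/I_{z_{n-1}}=S_k$, so $S$ is the $S$-vector of $I$.

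\textbf{(b)$\Leftrightarrow$(c).} The key identity is Cartan's formula applied to the $3$-form. Since $\theta_S(\vv)=0$,
\[
\iota_\vv(\theta_S\wedge\mathrm{d}\theta_S)=-\theta_S\wedge\iota_\vv\mathrm{d}\theta_S=-\theta_S\wedge\Lie_\vv\theta_S .
\]
Now use a frame adapted to $\vv$: the vector fields $\partial_{z_0},\dots,\partial_{z_{n-1}}$ together with $\vv$ span the tangent space. So a $3$-form $\Omega$ vanishes if and only if both $\iota_\vv\Omega=0$ and all components $\Omega_{z_az_bz_c}$ vanish. Apply this to $\Omega=\theta_S\wedge\mathrm{d}\theta_S$.

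\emph{(c)$\Rightarrow$(b).} If $S$ solves~\eqref{eq:Ssys}, Lemma~\ref{lem:Lie} gives $\Lie_\vv\theta_S=(\sigma+\varphi_{n-1})\theta_S$. Then $\theta_S\wedge\Lie_\vv\theta_S=0$, so $\iota_\vv\Omega=0$. Together with the vanishing of the $\binom n3$ components $\Omega_{z_az_bz_c}$, this gives $\Omega=0$.

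\emph{(b)$\Rightarrow$(c).} If $\Omega=0$, then $\theta_S\wedge\Lie_\vv\theta_S=0$. Because $\theta_S\ne0$, this forces $\Lie_\vv\theta_S=g\,\theta_S$ for some function $g$. Comparing the $\mathrm{d}z_{n-1}$-components via Lemma~\ref{lem:Lie}, where $S_{n-1}=1$, gives $g=\sigma+\varphi_{n-1}$. Then~\eqref{eq:Lietheta} holds, so $S$ solves~\eqref{eq:Ssys}. The $z$-components of $\Omega$ vanish trivially.

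\textbf{Main obstacle.} The step needing care is the division $\theta_S\wedge\alpha=0\Rightarrow\alpha=g\theta_S$ when forms are only meromorphic. I will work on the open set where the $S_k$ and $\varphi$ are holomorphic, where $\theta_S$ is nowhere vanishing because of the normalisation $S_{n-1}=1$. The equivalences are then local there, and Frobenius applies pointwise.
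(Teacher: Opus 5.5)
Your proposal is correct and follows the paper's proof. It uses Frobenius for (a)$\Leftrightarrow$(b), and Cartan's identity $\iota_\vv\Omega=-\theta_S\wedge\Lie_\vv\theta_S$ with Lemma~\ref{lem:Lie} to split $\Omega=0$ into the $S$-system plus the $\binom n3$ components $\Omega_{z_az_bz_c}$; your frame $(\vv,\partial_{z_0},\dots,\partial_{z_{n-1}})$ is a cleaner phrasing of the paper's decomposition $\Omega=\beta+\mathrm{d}x\wedge\alpha$, and your direct (b)$\Rightarrow$(c), obtained by dividing $\theta_S\wedge\Lie_\vv\theta_S=0$ by the nowhere-vanishing $\theta_S$ (where the paper instead goes through (a) and Proposition~\ref{prop:Ssys}), is exactly the remark the paper makes after the theorem.
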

\begin{proof}
(a)$\Rightarrow$(b): $\theta_S=\mathrm{d}I/R$, so
$\mathrm{d}\theta_S=\mathrm{d}(1/R)\wedge\mathrm{d}I$ and
$\theta_S\wedge\mathrm{d}\theta_S=0$.
(b)$\Rightarrow$(a): by Frobenius, locally $\theta_S=f\,\mathrm{d}I$ with
$f\ne0$; then $\D(I)=f^{-1}\theta_S(\vv)=0$ and the $\mathrm{d}z_{n-1}$-component
gives $fI_{z_{n-1}}=1$, so $I_{z_{n-1}}\ne0$ and $S_k=I_{z_k}/I_{z_{n-1}}$.
(a)$\Rightarrow$(c) by Proposition~\ref{prop:Ssys} and (b).
(c)$\Rightarrow$(b): put $\Omega=\theta_S\wedge\mathrm{d}\theta_S$ and
$g=\sigma+\varphi_{n-1}$.  Since $\theta_S(\vv)=0$, Cartan's formula and
Lemma~\ref{lem:Lie} give
\[
  \iota_\vv\Omega=-\theta_S\wedge\iota_\vv\mathrm{d}\theta_S
  =-\theta_S\wedge\Lie_\vv\theta_S=-g\,\theta_S\wedge\theta_S=0 .
\]
Write $\Omega=\beta+\mathrm{d}x\wedge\alpha$ with $\alpha,\beta$ free of
$\mathrm{d}x$, and $\vv=\partial_x+\vv'$.  The $\mathrm{d}x$-free part of
$\iota_\vv\Omega=0$ is $\alpha+\iota_{\vv'}\beta=0$, so $\Omega$ is determined
by $\beta$, whose components are the $(\theta_S\wedge\mathrm{d}\theta_S)_{z_az_bz_c}$.
\end{proof}

The identity $\iota_\vv\Omega=-\theta_S\wedge(\Lie_\vv\theta_S-g\theta_S)$
holds for every $S$, and the $z_{n-1}$-component of $\Lie_\vv\theta_S-g\theta_S$
vanishes while that of $\theta_S$ is $1$; so $\iota_\vv\Omega=0$ is in fact
\emph{equivalent} to the $S$-system.  Thus the Frobenius $3$-form splits into
a part annihilated by $\vv$, which is the $S$-system, and $\binom n3$ basic
components, which are the genuine realisability conditions.  These are
polynomials in the components $S_k$ and their partial derivatives, hence, by
Theorem~\ref{thm:collapse}, in $\sigma$ and its partial derivatives; they cost
no integration.  For $n=2$ there are none.  At bottom
Theorem~\ref{thm:frobenius} is the Frobenius theorem applied to the
projectivised adjoint system; its content is the splitting.

\begin{corollary}\label{cor:n2}
For $n=2$ every solution of the $S$-equation is realisable.  In fact, for
arbitrary $\varphi$ and $S$,
$\theta_S\wedge\mathrm{d}\theta_S=\bigl(\D(S)-S^2-\varphi_zS+\varphi_y\bigr)\,
\mathrm{d}x\wedge\mathrm{d}y\wedge\mathrm{d}z$.
\end{corollary}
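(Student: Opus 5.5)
For $n=2$ the space has coordinates $(x,y,z)$, so $\theta_S\wedge\mathrm{d}\theta_S$ is a $3$-form in three variables. It is therefore a single function times $\mathrm{d}x\wedge\mathrm{d}y\wedge\mathrm{d}z$, and the plan is to compute that coefficient directly. By~\eqref{eq:theta}, with $S_0=S$ and $S_1=1$,
\[
  \theta_S=S\,\mathrm{d}y+\mathrm{d}z-(zS+\varphi)\,\mathrm{d}x .
\]
First I expand
\[
  \mathrm{d}\theta_S=\mathrm{d}S\wedge\mathrm{d}y-\mathrm{d}(zS+\varphi)\wedge\mathrm{d}x .
\]
Then I wedge with $\theta_S$ and keep only the terms containing all three differentials. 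This gives three contributions.
\begin{itemize}[leftmargin=*,itemsep=2pt]
\item From $S\,\mathrm{d}y$ against $-\mathrm{d}(zS+\varphi)\wedge\mathrm{d}x$: only the $\mathrm{d}z$ part of $\mathrm{d}(zS+\varphi)$ survives, namely $(S+zS_z+\varphi_z)\,\mathrm{d}z$.
\item From $\mathrm{d}z$ against both pieces: this uses $S_x$ from $\mathrm{d}S\wedge\mathrm{d}y$ and $(zS+\varphi)_y=zS_y+\varphi_y$ from the second piece.
\item From $-(zS+\varphi)\,\mathrm{d}x$ against $\mathrm{d}S\wedge\mathrm{d}y$: only $S_z\,\mathrm{d}z$ survives.
\end{itemize}

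Orienting everything as $\mathrm{d}x\wedge\mathrm{d}y\wedge\mathrm{d}z$ and collecting, I expect
\[
  S_x+zS_y+\varphi S_z-S^2-\varphi_zS+\varphi_y ,
\]
up to an overall sign that I will fix carefully. Here the $zS_z$ terms cancel between the first and third contributions. By~\eqref{eq:Dn}, $S_x+zS_y+\varphi S_z=\D(S)$, so the coefficient is $\D(S)-S^2-\varphi_zS+\varphi_y$. This is the stated identity, and its vanishing is exactly the $S$-equation~\eqref{eq:DS}, i.e.\ \eqref{eq:Ssys} for $n=2$.

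The first assertion then follows from Theorem~\ref{thm:frobenius}, (b)$\Rightarrow$(a): any solution of the $S$-equation makes $\theta_S\wedge\mathrm{d}\theta_S=0$, hence is realisable. Alternatively, one can use (c), since $\binom23=0$ leaves no extra conditions.

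The only real obstacle is sign bookkeeping in the wedge products. To guard against it, I will check the result against the structural argument in the proof of Theorem~\ref{thm:frobenius}. There $\iota_\vv\Omega=-\theta_S\wedge(\Lie_\vv\theta_S-g\theta_S)$. By Lemma~\ref{lem:Lie}, the $z_0$-component of $\Lie_\vv\theta_S-g\theta_S$ is exactly $\D(S)+\varphi_y-S^2-\varphi_zS$. Contracting $c\,\mathrm{d}x\wedge\mathrm{d}y\wedge\mathrm{d}z$ with $\partial_x$ and comparing $\mathrm{d}y\wedge\mathrm{d}z$ coefficients then pins down $c$ with the correct sign.
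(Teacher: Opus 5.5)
Your proposal is correct and matches the paper. The paper gets the first statement from Theorem~\ref{thm:frobenius}(c) with $\binom23=0$, and the identity either from $\iota_\vv\Omega=-\theta_S\wedge(\Lie_\vv\theta_S-g\theta_S)$ evaluated on $(\partial_y,\partial_z)$ or by the direct computation you carry out. The sign you left open is $+$: with $A=zS+\varphi$ the coefficient of $\mathrm{d}x\wedge\mathrm{d}y\wedge\mathrm{d}z$ is $AS_z-SA_z+A_y+S_x$, which reduces to $\D(S)-S^2-\varphi_zS+\varphi_y$.
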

\begin{proof}
The first statement is Theorem~\ref{thm:frobenius} with $\binom23=0$.  The
identity follows from $\iota_\vv\Omega=-\theta_S\wedge(\Lie_\vv\theta_S-g\theta_S)$
evaluated on $(\partial_y,\partial_z)$, or by direct computation.
\end{proof}

The next result describes both the solutions and the realisable ones
intrinsically.  Let $\pi:U\to N$, $(x,I_1,\dots,I_n)\mapsto(I_1,\dots,I_n)$, be
the projection to the local space of leaves $N$, an open set of
$\mathbb{C}^n$; functions on $N$ are the elements of $\CC$.

\begin{theorem}[Intrinsic description]\label{thm:intrinsic}
\begin{enumerate}[label=\textup{(\roman*)},nosep,leftmargin=*]
\item $S$ solves the $S$-system if and only if
$\theta_S=h\,\pi^*\omega$ for a meromorphic function $h\ne0$ on $U$ and a
$1$-form $\omega=\sum_ic_i\,\mathrm{d}I_i$ on $N$ ($c_i\in\CC$).  The class of
$\omega$ up to multiplication by $\CC^\times$ is the hyperplane $H$ of
Theorem~\ref{thm:plucker}; thus the solutions of the $S$-system are the
hyperplane fields on $N$.
\item $S$ is realisable if and only if $\omega\wedge\mathrm{d}\omega=0$: the
realisable solutions are the integrable hyperplane fields on $N$.
\item The characteristics $Q\in\VV$ are the vector fields on $N$: the
prolonged evolutionary field of $Q$ maps a first integral $I$ to $\Pi(I,Q)$,
so $Q$ induces the vector field $X_Q$ on $N$ with
$\mathrm{d}I_i(X_Q)=\Pi(I_i,Q)$, and $\PP(\VV)$ is the set of line fields on
$N$.  The hyperplane of $S$ is
$\{Q:\omega(X_Q)=0\}$.
\end{enumerate}
\end{theorem}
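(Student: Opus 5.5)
The plan is to work throughout in the flow-box coordinates $(x,I_1,\dots,I_n)$ on $U$, where $\D=\partial_x$ and $\vv=\partial_x$. Every $1$-form $\theta$ with $\theta(\vv)=0$ has no $\mathrm{d}x$-component there, so $\theta=\sum_i a_i\,\mathrm{d}I_i$ with meromorphic coefficients $a_i$ on $U$. We apply this to $\theta_S$. Since $\mathrm{d}I_i$ is closed and $\iota_\vv\mathrm{d}I_i=0$, we get $\Lie_\vv\,\mathrm{d}I_i=0$. Hence $\Lie_\vv\theta_S=\sum_i\partial_x(a_i)\,\mathrm{d}I_i$.

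For (i) we use Lemma~\ref{lem:Lie}: $S$ solves the $S$-system iff $\partial_x a_i=g\,a_i$ for all $i$, with $g=\sigma+\varphi_{n-1}$. Some $a_{i_0}\ne0$, because the $\mathrm{d}z_{n-1}$-component of $\theta_S$ is $1$. Then $\partial_x(a_i/a_{i_0})=0$, so $c_i:=a_i/a_{i_0}\in\CC$. Taking $h=a_{i_0}$ gives $\theta_S=h\,\pi^*\omega$. Conversely, if $\theta_S=h\,\pi^*\omega$, then $\Lie_\vv\theta_S=(\partial_xh/h)\theta_S$. Comparing $\mathrm{d}z_{n-1}$-components, as in Lemma~\ref{lem:Lie}, forces $\partial_xh/h=\sigma+\varphi_{n-1}$, and the $S$-system follows. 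The class $[\omega]$ is well defined up to $\CC^\times$, since $h$ is determined up to a factor in $\CC^\times$ once $\omega$ is.

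To identify $[\omega]$ with $H$, we evaluate on characteristics. The $z$-components of $\theta_S$ are the $S_k$, so $\sum_kS_k\D^k(Q)=\sum_k(\theta_S)_{z_k}\D^kQ$. This equals $h\sum_ic_i\sum_k(I_i)_{z_k}\D^kQ=h\sum_ic_i\Pi(I_i,Q)$. This is where (iii) enters, and we prove (iii) first. By Lemma~\ref{lem:vk}, $\Pi(I,Q)\in\CC$ for $Q\in\VV$. The prolonged evolutionary field $\mathrm{pr}\,X_Q=\sum_k\D^k(Q)\partial_{z_k}$ applied to $I$ is exactly $\Pi(I,Q)$, which lies in $\CC$. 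Thus $\mathrm{pr}\,X_Q$ maps $\CC$ to $\CC$ and induces a derivation of the function field of $N$, that is, a vector field $X_Q$ with $\mathrm{d}I_i(X_Q)=\Pi(I_i,Q)$.

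Next we show that $Q\mapsto X_Q$ is a $\CC$-linear bijection from $\VV$ onto vector fields on $N$. Both sides have dimension $n$: the right-hand side has basis $\partial_{I_i}$. For the basis $Q^*_j$ of Lemma~\ref{lem:dimV}, $\Pi(I_i,Q^*_j)=(VW)_{ij}=\delta_{ij}$, so $X_{Q^*_j}=\partial_{I_j}$. Hence the map is an isomorphism, and $\PP(\VV)$ is the set of line fields on $N$. The computation above then reads $\sum_kS_k\D^kQ=h\,\omega(X_Q)$. By Theorem~\ref{thm:plucker} the hyperplane of $S$ is $\ker p=\{Q:\omega(X_Q)=0\}$, which proves the last sentence of (iii) and completes (i).

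For (ii) we use Theorem~\ref{thm:frobenius}: realisability is equivalent to $\theta_S\wedge\mathrm{d}\theta_S=0$. With $\theta_S=h\,\pi^*\omega$, $\mathrm{d}\theta_S=\mathrm{d}h\wedge\pi^*\omega+h\,\pi^*\mathrm{d}\omega$. The first term dies when wedged with $\theta_S$, so $\theta_S\wedge\mathrm{d}\theta_S=h^2\pi^*(\omega\wedge\mathrm{d}\omega)$. Since $\pi$ is a submersion, $\pi^*$ is injective on forms, and (ii) follows.

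The main obstacle is the bookkeeping in (iii): checking that $\mathrm{pr}\,X_Q$ preserves $\CC$ and that $Q\mapsto X_Q$ is onto. The explicit identity $VW=\mathrm{Id}$ handles the surjectivity. The rest is formal once we are in flow-box coordinates.
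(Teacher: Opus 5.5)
Your proposal is correct and follows essentially the paper's proof. You use flow-box coordinates with Lemma~\ref{lem:Lie} to get $\partial_x a_i=g\,a_i$ and hence constant ratios $c_i$, the identity $\theta_S\wedge\mathrm{d}\theta_S=h^2\pi^*(\omega\wedge\mathrm{d}\omega)$ for (ii), and $\sum_kS_k\D^kQ=h\sum_ic_i\Pi(I_i,Q)$ to identify $[\omega]$ with $H$. The differences are minor: you make the isomorphism $\VV\to\CC^n$ explicit via $\Pi(I_i,Q^*_j)=(VW)_{ij}=\delta_{ij}$, and you get that every hyperplane field on $N$ arises by carrying the bijection of Theorem~\ref{thm:plucker} through $Q\mapsto X_Q$, a step you should state explicitly. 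The paper instead shows directly that $w_{n-1}=(V^\top c)_{n-1}\not\equiv0$ for every nonzero $\omega$, by the descent of Proposition~\ref{prop:Rnonzero}.
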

\begin{proof}
(i) In the coordinates $(x,I)$, $\vv=\partial_x$ and $\theta_S(\vv)=0$ means
$\theta_S=\sum_i\theta_i\,\mathrm{d}I_i$.  By Lemma~\ref{lem:Lie}, $S$ solves
the system iff $\partial_x\theta_i=g\,\theta_i$ for all $i$.  Choosing $i_0$
with $\theta_{i_0}\not\equiv0$ and putting $h:=\theta_{i_0}$,
$c_i:=\theta_i/\theta_{i_0}$, this says $\partial_xc_i=0$, i.e.\ $c_i\in\CC$.
Conversely, let $\omega=\sum_ic_i\,\mathrm{d}I_i\ne0$.  The $z$-part of
$\pi^*\omega$ is $w=V^\top c$, which satisfies $\D(w)=-B^\top w$; if
$w_{n-1}\equiv0$, the descent in the proof of Proposition~\ref{prop:Rnonzero}
gives $w\equiv0$, hence $c=0$.  So $w_{n-1}\not\equiv0$, and
$\theta:=\pi^*\omega/w_{n-1}$ is some $\theta_S$ with
$\Lie_\vv\theta=-(\partial_xw_{n-1}/w_{n-1})\,\theta$, so Lemma~\ref{lem:Lie}
applies.  In the original
coordinates $\pi^*\omega=\sum_ic_i\nabla I_i$, whose $z$-part is
$V^\top c$, so the map $p$ of the proof of Theorem~\ref{thm:plucker} is, up
to a factor, $Q\mapsto\sum_ic_i\Pi(I_i,Q)$, and its kernel is $H$.
(ii) $\theta_S\wedge\mathrm{d}\theta_S=h^2\,\pi^*(\omega\wedge\mathrm{d}\omega)$,
and $\pi^*$ is injective on forms.
(iii) $\Pi(I_i,Q)\in\CC$ by~\eqref{eq:pairing}, and by
Lemma~\ref{lem:dimV} the map $Q\mapsto(\Pi(I_i,Q))_i$ is a $\CC$-linear
isomorphism $\VV\to\CC^n$, i.e.\ onto the vector fields on $N$.  The last
statement is $\sum_ic_i\Pi(I_i,Q)=\omega(X_Q)$.
\end{proof}

\begin{corollary}[Strictness for every $n\ge3$]\label{cor:strict}
For $n\ge3$ there are solutions of the $S$-system, and solutions $\sigma$ of
$\Ef_n(\sigma)=0$, that are not realisable on any open set.
\end{corollary}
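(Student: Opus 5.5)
The plan is to use Theorem~\ref{thm:intrinsic}, which identifies solutions of the $S$-system with hyperplane fields $\omega=\sum_ic_i\,\mathrm{d}I_i$ on the leaf space $N\subset\mathbb{C}^n$. The realisable solutions are exactly those with $\omega\wedge\mathrm{d}\omega=0$. For $n\ge3$ we exhibit a non-integrable $\omega$. We take the contact-type form
\[
  \omega=\mathrm{d}I_3+I_1\,\mathrm{d}I_2 ,
\]
whose coefficients $c_i\in\{1,I_1,0\}$ lie in $\CC$. Then
\[
  \omega\wedge\mathrm{d}\omega=\mathrm{d}I_3\wedge\mathrm{d}I_1\wedge\mathrm{d}I_2 .
\]
This is a nonvanishing $3$-form on all of $N$, because $I_1,I_2,I_3$ are part of a coordinate system and $n\ge3$. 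Hence $\omega\wedge\mathrm{d}\omega\ne0$ on every open subset of $N$.

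Next we transport this back to $U$. By the converse part of Theorem~\ref{thm:intrinsic}(i), $w:=V^\top c$ has $w_{n-1}\not\equiv0$, and $\theta:=\pi^*\omega/w_{n-1}$ is $\theta_S$ for a solution $S$ of~\eqref{eq:Ssys}. Explicitly, $S_k=w_k/w_{n-1}$, where $w_k=\partial_{z_k}I_3+I_1\,\partial_{z_k}I_2$.

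Suppose $S$ were the $S$-vector of a first integral on some open set $U'\subset U$. By Theorem~\ref{thm:frobenius} we would have $\theta_S\wedge\mathrm{d}\theta_S=0$ on $U'$. But $\theta_S\wedge\mathrm{d}\theta_S=h^2\pi^*(\omega\wedge\mathrm{d}\omega)$ with $h=1/w_{n-1}$. Since $\pi$ is a submersion, $\pi^*$ is injective on forms, and $h\ne0$ off a thin set, this would force $\omega\wedge\mathrm{d}\omega=0$ on the open set $\pi(U')$, a contradiction. The argument is local: Theorem~\ref{thm:frobenius}(a)$\Rightarrow$(b) holds on any open set where $S$ comes from a first integral. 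So $S$ is not realisable on any open set.

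For the statement about $\sigma$, Theorem~\ref{thm:collapse}(ii) says that the solutions of~\eqref{eq:Ssys} correspond bijectively to solutions of $\Ef_n(\sigma)=0$ via $S_k=\widehat S_k(\sigma)$. Therefore $\sigma:=S_{n-2}=w_{n-2}/w_{n-1}$ solves $\Ef_n(\sigma)=0$. Its associated $S$-vector is the one above, so it is not realisable either.

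The only delicate point is making sure the example is genuinely available for arbitrary $\varphi$, including very simple ones such as $y^{(n)}=0$. This holds because the construction uses only the flow-box integrals $I_1,I_2,I_3$, which exist by the standing assumption on $U$. If the paper wants a fully explicit example, one can specialise to $y'''=0$ with $I_1=z_2$, $I_2=z_1-xz_2$, $I_3=z_0-xz_1+\tfrac{x^2}{2}z_2$. The rational formulae for $S_0,S_1$ then follow directly from $w=V^\top c$, and checking one component $(\theta_S\wedge\mathrm{d}\theta_S)_{z_0z_1z_2}\ne0$ is routine.
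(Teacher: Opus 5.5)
Your proposal is correct and follows essentially the paper's proof. The paper likewise takes the contact form $\omega=\mathrm{d}I_a+I_b\,\mathrm{d}I_c$ on the leaf space, gets the $S$-system solution and its $\sigma$ from Theorem~\ref{thm:intrinsic}(i) and Theorem~\ref{thm:collapse}, and concludes from $\omega\wedge\mathrm{d}\omega=\mathrm{d}I_a\wedge\mathrm{d}I_b\wedge\mathrm{d}I_c\ne0$; your $h^2\pi^*$ argument for ``on any open set'' just unpacks Theorem~\ref{thm:intrinsic}(ii), and your $y'''=0$ specialisation (giving $S_0=1/(x^2/2-xz_2)$, $\sigma=(z_2-x)/(x^2/2-xz_2)$) is a valid analogue of Example~\ref{ex:nonrealisable}.
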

\begin{proof}
Take distinct $a,b,c$ and $\omega=\mathrm{d}I_a+I_b\,\mathrm{d}I_c$.  By
Theorem~\ref{thm:intrinsic}(i) and its proof, its normalisation is a solution
of the $S$-system, and its $\sigma$ solves
$\Ef_n=0$ by Theorem~\ref{thm:collapse}.  But
$\omega\wedge\mathrm{d}\omega=\mathrm{d}I_a\wedge\mathrm{d}I_b\wedge\mathrm{d}I_c\ne0$.
\end{proof}

\begin{corollary}[The two accidents of order two]\label{cor:accidents}
When $n=2$, $N$ is a surface.  Every $1$-form on a surface is integrable, which
is Corollary~\ref{cor:n2}; and a line field on a surface determines a unique
hyperplane field (its annihilator), which identifies $\PP(\VV)$ with
$\PP(\VV^\vee)$ and is the identity $\lambda=-S$
(Proposition~\ref{prop:lambda-dict}).  For $n\ge3$ neither holds: hyperplane
fields need not be integrable (Corollary~\ref{cor:strict}), and a hyperplane
contains a $\PP^{n-2}(\CC)$ of line fields.
\end{corollary}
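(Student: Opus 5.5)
The corollary makes four claims: (1) for $n=2$ every $1$-form on $N$ is integrable; (2) for $n=2$ a line field determines a unique hyperplane field, giving $\PP(\VV)\cong\PP(\VV^\vee)$ and $\lambda=-S$; (3) for $n\ge3$ integrability fails; (4) for $n\ge3$ a hyperplane contains a $\PP^{n-2}(\CC)$ of line fields. I will treat the two accidents separately. Each rests on Theorem~\ref{thm:intrinsic}, which identifies solutions with hyperplane fields $[\omega]$ on $N$ and characteristics with vector fields $X_Q$ on $N$.

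For the first accident, I note that when $n=2$ the form $\omega\wedge\mathrm{d}\omega$ is a $3$-form on the two-dimensional $N$, so it vanishes. Theorem~\ref{thm:intrinsic}(ii) then says every solution is realisable, which recovers Corollary~\ref{cor:n2}. For $n\ge3$ the failure is exactly Corollary~\ref{cor:strict}.

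For the second accident, I use Theorem~\ref{thm:intrinsic}(iii). By Lemma~\ref{lem:dimV}, $Q\mapsto X_Q$ is a $\CC$-linear isomorphism $\VV\cong\CC^n$, so $\PP(\VV)$ is the set of line fields on $N$ and $\PP(\VV^\vee)$ the set of hyperplane fields. When $n=2$, a line $\CC Q\subset\VV$ is itself a hyperplane. Its annihilator in $\VV^\vee$ is a line, so $[Q]\mapsto[\mathrm{Ann}(Q)]$ is a bijection $\PP(\VV)\to\PP(\VV^\vee)$. Concretely, Theorem~\ref{thm:plucker} with $n=2$ gives $S_0=-\D(Q)/Q$ for $H=\CC Q$. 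To get $\lambda=-S$, I compare this with the $\lambda$ attached to the line field $[Q]$ as in Proposition~\ref{prop:lambda-dict}, presumably $\lambda=\D(Q)/Q$ for the characteristic of $\partial_y$ after the standard normalisation. Since that proposition comes later, I take it as the dictionary and only check that both objects are determined by the same class $[Q]$. For $n\ge3$, I fix a hyperplane $H\subset\VV$ of dimension $n-1$. The line fields it contains are $\PP(H)\cong\PP^{n-2}(\CC)$, which has positive dimension, so a hyperplane field no longer singles out a line field. Dually, a line lies in a $\PP^{n-2}$ of hyperplanes. Hence there is no canonical identification between $\PP(\VV)$ and $\PP(\VV^\vee)$.

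The main obstacle is making precise that ``$\lambda=-S$ holds only then'' means there is no natural bijection, rather than that no bijection of sets exists. I would phrase it as follows: the $\lambda$ of a single symmetry determines a point of $\PP(\VV)$, while $S$ determines a point of $\PP(\VV^\vee)$. For $n\ge3$ the fibres of the incidence correspondence $\{(\ell,H):\ell\subset H\}$ have dimension $n-2>0$ in both directions, so neither object determines the other. This is consistent with the observation in~\cite{Mohanasubha2015} that one $\lambda$ does not fix the null forms when $n=3$.
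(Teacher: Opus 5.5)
Your proposal is correct and follows the paper's own reasoning. For $n=2$ you use the vanishing of the $3$-form $\omega\wedge\mathrm{d}\omega$ on the surface $N$ (equivalently Corollary~\ref{cor:n2}), and you read the line $\CC Q\subset\VV$ as a hyperplane, so that Theorem~\ref{thm:plucker} gives $S_0=-\D(Q)/Q$ while Proposition~\ref{prop:lambda-dict} gives $\lambda=\D(Q)/Q$ (the ``presumably'' is unnecessary, as that is its definition); for $n\ge3$ you use Corollary~\ref{cor:strict} together with $\PP(H)\cong\PP^{n-2}(\CC)$, exactly as the paper does.
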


The novelty relative to~\cite{AncoBluman1998,Mohanasubha2016} is that the
condition is \emph{projective}: an adjoint symmetry $R$ yields an integrating
factor only after multiplication by a suitable element of $\CC$, and
Theorem~\ref{thm:frobenius} decides whether such an element exists without
looking for it.

\section{Consequences for the \texorpdfstring{$S$}{S}-function method}\label{sec:method}

\paragraph{One unknown function.}  By Theorem~\ref{thm:collapse}, a search
for a rational $S$-vector by a bounded-degree ansatz is a search for the
single rational function $\sigma$; the other components follow by
differentiation.  By Theorem~\ref{thm:collapse}(iii) this is the same as
searching for an adjoint symmetry $R$ with rational logarithmic derivative
$\D(R)/R=-(\sigma+\varphi_{n-1})$.  The price is that $\Ef_n$ has order $n-1$
and degree $n$, and that $\widehat S_k$ has degree $n-1-k$ in $\sigma$, so the
degrees of the implied components grow; Table~\ref{tab:growth} records the
size of $\Ef_n$.  Whether the reduced system is cheaper than the
$(n-1)$-component one is a practical question that we do not settle.

\paragraph{Candidates must be tested.}  For $n\ge3$ a solution of
$\Ef_n(\sigma)=0$ is only a candidate: by Theorem~\ref{thm:frobenius} it must
satisfy the $\binom n3$ further conditions, which are polynomial in $\sigma$
and its partial derivatives and cost no integration.  Example~\ref{ex:nonrealisable}
shows that the test can fail.

\paragraph{Recovering the first integral.}  Given a realisable $\sigma$, one
needs $R$ with $\D(R)=-(\sigma+\varphi_{n-1})R$, and then the representative
$cR$, $c\in\CC^\times$, that makes
$\omega_{cR}=cR\,\theta_{\widehat S}$ closed; then $I$ follows by one
quadrature.  The first step is a linear first-order equation along the flow,
which is a quadrature only when a function with the required logarithmic
derivative is known (for instance a hyperexponential $R$ found by a Darboux
search, as in~\cite{ACDM2019,DEM2024}).  The second step is an
integrating-factor problem in the leaf variables.  Both are elementary in
the examples below, because first integrals were already at hand; they are
not quadratures in general: on the second branch
of Example~\ref{ex:search}, $\sigma+\varphi_2=0$, so $R$ is itself a first
integral, and the naive choice $R=1$ does not close the form while
$R=I_1^{-2}$ does.  The ambiguity in $R$ is exactly the reparametrisation
freedom $I\mapsto F(I)$: two first integrals have the same $S$-vector iff
they are functionally dependent, since then $\mathrm{d}I_1\wedge\mathrm{d}I_2=0$.

\paragraph{$\lambda$-symmetries.}  Relative to the evolutionary field
$\partial_y$, $\lambda$ is a $\lambda$-symmetry when
$(\D+\lambda)^n(1)=\sum_k\varphi_k(\D+\lambda)^k(1)$~\cite{MurielRomero2001}.

\begin{proposition}[Dictionary]\label{prop:lambda-dict}
For nonvanishing $Q$ and $\lambda:=\D(Q)/Q$ one has
$(\D+\lambda)^k(1)=\D^k(Q)/Q$ for all $k\ge0$, so the defect of the
$\lambda$-symmetry condition equals $L(Q)/Q$.  Hence $Q\mapsto\D(Q)/Q$ is a
bijection from $\PP(\VV)$ to the $\lambda$-symmetries relative to
$\partial_y$.
\end{proposition}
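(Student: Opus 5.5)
The plan has three steps: prove the identity $(\D+\lambda)^k(1)=\D^k(Q)/Q$ by induction on $k$, read off the defect formula, and then check that $Q\mapsto\D(Q)/Q$ is well defined, injective and surjective from $\PP(\VV)$ onto the $\lambda$-symmetries.

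\emph{The identity.} The base case $k=0$ is trivial. If $(\D+\lambda)^k(1)=\D^k(Q)/Q$, then
\[
(\D+\lambda)^{k+1}(1)=\D\bigl(\D^kQ/Q\bigr)+\frac{\D Q}{Q}\cdot\frac{\D^kQ}{Q}.
\]
The quotient rule turns the first term into $\D^{k+1}Q/Q-\D^kQ\,\D Q/Q^2$. The second piece cancels the added term, leaving $\D^{k+1}(Q)/Q$. This is just the conjugation $Q^{-1}\circ\D\circ Q=\D+\lambda$ applied to $1$.

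\emph{The defect.} Substituting the identity into the $\lambda$-symmetry condition gives
\[
(\D+\lambda)^n(1)-\sum_k\varphi_k(\D+\lambda)^k(1)=\frac{\D^nQ-\sum_k\varphi_k\D^kQ}{Q}=\frac{L(Q)}{Q}.
\]
So $\lambda=\D(Q)/Q$ is a $\lambda$-symmetry exactly when $Q\in\VV$.

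\emph{Well defined.} For $c\in\CC^\times$ we have $\D(cQ)/(cQ)=\D(Q)/Q$, since $\D c=0$. Hence the map descends to $\PP(\VV)$.

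\emph{Injective.} If $\D(Q_1)/Q_1=\D(Q_2)/Q_2$, then $\D(Q_1/Q_2)=0$, so $Q_1/Q_2\in\CC$ and $Q_1,Q_2$ give the same point of $\PP(\VV)$.

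\emph{Surjective.} This is the main obstacle. Given an arbitrary $\lambda$-symmetry $\lambda$, we need a nonvanishing $Q$ with $\D(Q)/Q=\lambda$. That means solving the linear first-order equation $\D Q=\lambda Q$ along the flow. In the flow-box coordinates of Section~\ref{ssec:field} this is $\partial_xQ=\lambda Q$, and we take
\[
Q=\exp\!\int\lambda\,\mathrm{d}x,
\]
with the integration in $x$ carried out at fixed $I$. This $Q$ is holomorphic and nonvanishing wherever $\lambda$ is regular. For meromorphic $\lambda$ it may fail to be meromorphic if $\lambda$ has poles with non-integer residues in $x$. I therefore expect to state surjectivity locally, near points where $\lambda$ is regular (shrinking $U$ if necessary), in keeping with the paper's convention that every statement is local. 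If the paper's setting requires $Q\in\mathcal{M}$, I would either restrict to the regular locus or note that $\VV$ is then taken over that smaller domain.

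Once $Q$ exists, the defect formula gives $L(Q)/Q=0$, so $Q\in\VV$ and $\lambda$ is its image. This completes the bijection.
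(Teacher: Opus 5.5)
Your identity, defect formula, well-definedness and injectivity are correct and match the paper. The gap is in surjectivity. In the paper's setting, $\VV$ consists of functions meromorphic on the whole flow box $U$, and a $\lambda$-symmetry is a meromorphic function on $U$. Your $Q=\exp\int\lambda\,\mathrm{d}x$ is built only on a subdomain $U'$ where $\lambda$ is regular. So what you obtain is a point of $\PP(\VV_{U'})$, a projective space over the generally larger field of first integrals on $U'$. Nothing in the argument shows that this point comes from $\PP(\VV)$, i.e.\ that $Q$ can be chosen meromorphic on $U$. Proposing to ``restrict to the regular locus'' changes the statement rather than proving it. Your diagnosis of the obstruction is also off. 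Branching of $\exp\int\lambda$ through non-integer residues does occur for a general meromorphic $\lambda$, but the $\lambda$-symmetry condition itself rules it out. The bijection holds as stated, and no integration is needed.

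The missing idea is to get a meromorphic solution of $\D Q=\lambda Q$ algebraically from the condition. Put $\Lambda_k:=(\D+\lambda)^k(1)$. Then $\D\Lambda_k=\Lambda_{k+1}-\lambda\Lambda_k$, and the $\lambda$-symmetry condition $\Lambda_n=\sum_k\varphi_k\Lambda_k$ says exactly $\D\Lambda=B\Lambda-\lambda\Lambda$. With $V=[\partial I_i/\partial z_k]$ and $\D V=-VB$ (Lemma~\ref{lem:vk}), this gives $\D(V\Lambda)=-\lambda\,V\Lambda$. Since $V$ is invertible and $\Lambda_0=1$, some component $w=(V\Lambda)_{j_0}$ is not identically zero. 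Then $Q:=1/w$ is meromorphic on all of $U$ and satisfies $\D(Q)/Q=-\D(w)/w=\lambda$, and your defect formula gives $Q\in\VV$; this is the paper's argument. You could also rescue your own route this way. Your local $Q$ satisfies $\D^kQ=Q\Lambda_k$, so $\Pi(I_j,Q)=Q\,(V\Lambda)_j$ is a first integral by \eqref{eq:pairing}. Hence $Q$ is a first-integral multiple of the function $1/(V\Lambda)_j$, which is meromorphic on $U$, and that function is the required preimage.
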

\begin{proof}
Induction:
$(\D+\lambda)(\D^kQ/Q)=\D^{k+1}Q/Q-\D^kQ\,\D Q/Q^2+(\D Q/Q)(\D^kQ/Q)=\D^{k+1}Q/Q$.
$\lambda$ is unchanged under $Q\mapsto cQ$, $c\in\CC^\times$, and determines
$Q$ up to such a factor.  For surjectivity, let $\lambda$ be a
$\lambda$-symmetry and $\Lambda_k:=(\D+\lambda)^k(1)$.  The defining condition
says $\D\Lambda=B\Lambda-\lambda\Lambda$, so $\D(V\Lambda)=-\lambda V\Lambda$.
Pick $j_0$ with $(V\Lambda)_{j_0}\not\equiv0$ (possible since $V$ is invertible
and $\Lambda_0=1$) and put $Q:=1/(V\Lambda)_{j_0}$, which is meromorphic with
$\D(Q)/Q=\lambda$; then $Q\in\VV$ by the first part.
\end{proof}

Restricting to $\partial_y$ loses nothing: every $\lambda$-symmetry is
equivalent to one of this form~\cite{MurielRomero2001}.

Combined with Theorem~\ref{thm:plucker} and Corollary~\ref{cor:accidents}:
one $\lambda$-symmetry determines no $S$-vector when $n\ge3$; $n-1$
$\lambda$-symmetries whose characteristics are $\CC$-independent determine
one solution of the $S$-system, which need not be realisable; and a solution
determines the hyperplane of compatible $\lambda$-symmetries but no
distinguished member.  The annihilation relation~\eqref{eq:annihilate} for a
single $\lambda$-symmetry is~\cite[Thm.~2.3]{Mohanasubha2016}, and the
non-uniqueness for $n=3$ was observed in~\cite{Mohanasubha2015}; the count
$n-1$ and the realisability caveat are what Theorem~\ref{thm:plucker} adds.
Whether $n-1$ $\lambda$-symmetries give an $(n-1)$-fold reduction by
quadratures is a different question~\cite{MurielRomeroRuiz2017}.

\paragraph{Classical identities in this language.}  For $n$ independent first
integrals, factoring $R_i$ out of the rows of $V=[\partial I_i/\partial z_k]$
gives
\[
  \frac{\partial(I_1,\dots,I_n)}{\partial(z_0,\dots,z_{n-1})}
  =\Bigl(\prod_iR_i\Bigr)\det\mathbb{S},\qquad
  \mathbb{S}_{ik}=S_{i,k},\ S_{i,n-1}=1,
\]
which is Jacobi's last multiplier ($\D(\det V)=-\varphi_{n-1}\det V$ since
$\D V=-VB$ and $\operatorname{tr}B=\varphi_{n-1}$;
see~\cite{NucciLeach2008}); for $n=2$ it is $R_1R_2(S_1-S_2)$.  The pairing
$\Pi(I,Q)$ of~\eqref{eq:pairing} produces a first integral from $I$ and $Q$
without integration and vanishes exactly on the hyperplane of $I$.  It
extracts only one $\CC$-coordinate of $Q$: $\Pi(I,\cdot)$ is a $\CC$-linear
form on $\VV$ with kernel $H$, and for a full set $I_1,\dots,I_n$ the top
entries $Q^*_j$ of the columns of $V^{-1}$ form a basis with
$\Pi(I_i,Q^*_j)=\delta_{ij}$.  Consequently, if $Q=\sum_jc_jQ^*_j$ with the
$c_j$ functions of first integrals already known, $\Pi(I,Q)$ is a function of
those first integrals and yields nothing new.

\section{Examples}\label{sec:examples}

\begin{example}[a third-order equation]\label{ex:third-order}
Let
\begin{equation}\label{eq:thirdorder}
  y''' = \frac{(y'')^2}{y'},\qquad
  \varphi=\frac{z_2^2}{z_1},\quad\varphi_0=0,\quad
  \varphi_1=-\frac{z_2^2}{z_1^2},\quad\varphi_2=\frac{2z_2}{z_1}.
\end{equation}
Independent first integrals and their data are
\[
\begin{aligned}
  I_1&=\frac{z_2}{z_1}, & R_1&=\frac1{z_1}, & S_1&=\Bigl(0,\ -\frac{z_2}{z_1}\Bigr),\\
  I_2&=\ln z_1-\frac{xz_2}{z_1}, & R_2&=-\frac x{z_1}, &
     S_2&=\Bigl(0,\ -\frac{z_2}{z_1}-\frac1x\Bigr),\\
  I_3&=z_0-\frac{z_1^2}{z_2}, & R_3&=\frac{z_1^2}{z_2^2}, &
     S_3&=\Bigl(\frac{z_2^2}{z_1^2},\ -\frac{2z_2}{z_1}\Bigr),
\end{aligned}
\]
and $(\prod R_i)\det\mathbb{S}=-1/z_1^2$ is the Jacobian, a last multiplier.
The variational equation has the $\CC$-independent solutions
$Q_1=1$, $Q_2=-z_1$ (the translations $\partial_y$, $\partial_x$) and
$Q_3=xz_1^2/z_2-z_1^3/z_2^2$; the scaling characteristic $z_0=I_3Q_1-I_1^{-1}Q_2$
is $\CC$-dependent on them although $\mathbb{C}$-independent.  The Plücker
formula~\eqref{eq:plucker} sends $\{Q_1,Q_2\}$, $\{Q_1,Q_3\}$, $\{Q_2,Q_3\}$ to
$S_1$, $S_2$, $S_3$.  This equation is linearisable
($(\ln y'')'=(\ln y')'$), and we use it only because everything can be checked
by hand.
\end{example}

\begin{example}[a search]\label{ex:search}
For~\eqref{eq:thirdorder} take
$\sigma=(a_0+a_1x+a_2z_0+a_3z_1+a_4z_2)/(b_0+b_1x+b_2z_0+b_3z_1+b_4z_2)$
and impose~\eqref{eq:E3}.  Clearing denominators gives $95$ polynomial
equations in the ten coefficients, whose solutions give exactly two
admissible branches, $\sigma=-z_2/z_1$ and $\sigma=-2z_2/z_1$: the principal
components of $S_1$ and $S_3$.  ($S_2$ has $\sigma=-(xz_2+z_1)/(xz_1)$, outside
the ansatz.)  Both pass the realisability test.  On the first branch
$\widehat S_0=0$, $R=1/z_1$ and the recovered integral is $I_1$; on the second
$\widehat S_0=z_2^2/z_1^2$, $R=z_1^2/z_2^2$ and the integral is $I_3$, with
the caveat on $R$ discussed in Section~\ref{sec:method}.
\end{example}

\begin{example}[a solution that is no $S$-vector, $n=3$]\label{ex:nonrealisable}
On~\eqref{eq:thirdorder}, $1-z_2=Q_1+I_1Q_2\in\VV$, and the hyperplane
$\langle1-z_2,Q_3\rangle_\CC$ has $m_2=z_1(x-z_1)$.  Formula~\eqref{eq:plucker}
gives
\begin{equation}\label{eq:bad-S}
  S_0 = \frac{-z_2^2}{z_1(x-z_1)},\qquad
  \sigma = \frac{2z_1z_2-xz_2-z_1}{z_1(x-z_1)} ,
\end{equation}
which solves the $S$-system and~\eqref{eq:E3}, while
\[
  \bigl(\theta_S\wedge\mathrm{d}\theta_S\bigr)_{z_0z_1z_2}
   = \frac{2z_2}{z_1(x-z_1)^2}\ne0 .
\]
In the language of Theorem~\ref{thm:intrinsic}, $\theta_S$ is the normalisation
of $\mathrm{d}I_2+I_1^2\,\mathrm{d}I_3$, an instance of the construction of
Corollary~\ref{cor:strict}.
\end{example}

\begin{example}[$n=4$]\label{ex:n4}
For $y''''=(y''')^2/y''$ the functions $J_1=z_3/z_2$,
$J_2=\ln z_2-xz_3/z_2$ and $J_3=z_1-z_2^2/z_3$ are first integrals, and
$\omega=\mathrm{d}J_2+J_1^2\,\mathrm{d}J_3$ gives the solution of the
$S$-system
\[
  S=\Bigl(0,\ -\frac{z_3^2}{z_2(x-z_2)},\
     -\frac{xz_3-2z_2z_3+z_2}{z_2(x-z_2)}\Bigr),
  \qquad
  \bigl(\theta_S\wedge\mathrm{d}\theta_S\bigr)_{z_1z_2z_3}
   =\frac{2z_3}{z_2(x-z_2)^2}\ne0 .
\]
Four of the ten components of $\theta_S\wedge\mathrm{d}\theta_S$ are nonzero.
\end{example}

\begin{example}[the third-order Emden equation]\label{ex:emden3}
The substitution $y=u'/u$ carries $u''''=0$ into
$y'''+4yy''+3(y')^2+6y^2y'+y^4=0$.  With $p_1=z_0$, $p_2=z_1+z_0^2$,
$p_3=z_2+3z_0z_1+z_0^3$ (so $p_k=u^{(k)}/u$),
\[
  I_1=\frac{p_2-xp_3}{p_3},\quad
  I_2=\frac{p_1-xp_2+\frac12x^2p_3}{p_3},\quad
  I_3=\frac{1-xp_1+\frac12x^2p_2-\frac16x^3p_3}{p_3}
\]
are first integrals.  Their $S$-vectors solve~\eqref{eq:Ssys} and are
realisable; their principal components solve~\eqref{eq:E3}; and
$\partial(I_1,I_2,I_3)/\partial(z_0,z_1,z_2)=p_3^{-4}$.
\end{example}

\paragraph{Verification.}  Every identity and example of this paper was checked
by two independent implementations, in SymPy and in Maple, with no numerical
substitution: the $n=2$ facts of Section~\ref{ssec:two} on the harmonic
oscillator, the modified Emden equation and $y''=y'^2/y$;
Example~\ref{ex:third-order} completely; Theorem~\ref{thm:collapse}(iii) for
$2\le n\le5$ and Proposition~\ref{prop:shape} for $2\le n\le7$;
Example~\ref{ex:search}; Corollary~\ref{cor:n2} for generic $\varphi$ and $S$;
Examples~\ref{ex:nonrealisable}--\ref{ex:emden3}; and the Plücker formula on
$y'''=y''$, $y''''=y'''$ and $y^{(5)}=y'''$ for every $(n-1)$-subset of the
obvious fundamental sets.  The scripts are supplementary material.

\section{Outlook}\label{sec:outlook}

\paragraph{Two base fields.}  The structures of this paper live over the field
$\CC$ of local first integrals.  For rational equations there is also the
algebraic setting $K=\mathbb{C}(x,z_0,\dots,z_{n-1})$ with the derivation
$\D$, whose constants are the \emph{rational} first integrals; for an
equation without them the constants are $\mathbb{C}$, and Picard--Vessiot
theory applies to $L^*$ over $(K,\D)$; we assume this case here.  A rational
solution of the $S$-system then corresponds to a hyperexponential solution of
$L^*$ over $K$, and its existence is governed by the differential Galois group
of $L^*$.  Unlike the case of $\mathbb{C}(x)$, no general algorithm is known
for hyperexponential solutions over $(K,\D)$: finding them amounts to finding
Darboux polynomials of $\D$, which runs into the degree-bound problem of the
Prelle--Singer method.

\paragraph{Compatibility conditions.}  We have not compared the $\binom n3$
conditions of Theorem~\ref{thm:frobenius} with the compatibility constraints
imposed on the null forms in~\cite[eqs.~(2.10)--(2.13)]{CSL2005n}; the
comparison would be useful.

\paragraph{Realisability inside the ansatz.}  The $\binom n3$ conditions of
Theorem~\ref{thm:frobenius} can be imposed together with $\Ef_n(\sigma)=0$ in
a rational search, which would remove non-realisable candidates before any
integration; whether this makes the search cheaper is open.

\paragraph{Partial integrability.}  Knowing $r<n$ independent first integrals
amounts to an $r$-dimensional subspace of $\VV^\vee$, i.e.\ to an integrable
codimension-$r$ distribution on $N$; the corresponding ``partial
$S$-function'' is the analogue of the objects of this paper for that
distribution.

\section*{Statements and declarations}

\noindent\textbf{Funding.} \todo{CNPq/FAPERJ numbers, as in the companion papers}

\medskip\noindent\textbf{Competing interests.} The authors declare none.

\medskip\noindent\textbf{Use of AI tools.}  Claude (Anthropic) was used in
working sessions to propose and check constructions and proofs, to write the
SymPy and Maple verification scripts, to act as an internal referee of
successive drafts, and to draft parts of the text.  Every statement and proof
was checked by the authors, and every computation reported here was
reproduced by two independent implementations.  The AI system is not an
author; the authors take full responsibility for the content.

\medskip\noindent\textbf{Data and code availability.}  The SymPy and Maple
scripts that reproduce every check listed in Section~\ref{sec:examples} are
provided as supplementary material.

\section*{Acknowledgements}

We thank Claude (Anthropic), the AI assistant described above, for its help in
developing this work.

\appendix
\section{The closing equations}\label{app:closing}

\begin{proposition}\label{prop:shape}
For every $n\ge2$:
\begin{enumerate}[label=\textup{(\roman*)},leftmargin=*,nosep]
\item $\widehat S_k$ has degree $n-1-k$ in $\sigma$ with leading coefficient
$1$; in particular $\Ef_n$ is monic of degree $n$;
\item the coefficient of $\sigma^{n-1}$ in $\Ef_n$ is $(n-1)\varphi_{n-1}$;
\item $\D^{n-1}\sigma$ occurs linearly in $\Ef_n$, with coefficient
$(-1)^{n-1}$;
\item for $n\ge3$ the coefficient of $\sigma^{n-2}$ is
$-\varphi_{n-2}+\binom{n-1}{2}\bigl(\varphi_{n-1}^2-\D\varphi_{n-1}\bigr)
-\binom{n}{2}\D\sigma$.
\end{enumerate}
\end{proposition}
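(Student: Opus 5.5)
We prove all four statements by downward induction on $k$ in the recursion~\eqref{eq:recursion}, tracking at each step only the top two $\sigma$-degrees and the highest derivative of $\sigma$. Assign $\sigma$ weight $1$, $\D$ weight $1$, and $\varphi_j$ weight $n-j$. Then every term of $\widehat S_k$ is isobaric of weight $n-1-k$; this follows by induction, since each of $-\D\widehat S_k$, $\widehat S_k\sigma$, $\varphi_{n-1}\widehat S_k$ and $\varphi_k$ raises the weight by one. Weight bounds the $\sigma$-degree, counting $\D^j\sigma$ as degree $1$ and weight $j+1$. So the degree of $\widehat S_k$ is at most $n-1-k$, and the only degree-$(n-1-k)$ term is a pure power of $\sigma$.

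\textbf{(i).} Induct with the claim $\widehat S_k=\sigma^{n-1-k}+(\text{lower degree})$. In $\widehat S_{k-1}$ the term $\widehat S_k\sigma$ contributes $\sigma^{n-k}$. The other three terms have degree at most $n-1-k$: $\D$ does not raise degree, and multiplying by $\varphi_{n-1}$ or adding $\varphi_k$ does not either. Taking $k=0$ gives that $\Ef_n=\widehat S_{-1}$ is monic of degree $n$.

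\textbf{(iii).} Induct on the claim that $\D^{n-2-k}\sigma$ is the highest derivative of $\sigma$ in $\widehat S_k$, that it occurs linearly, and that its coefficient is $(-1)^{n-2-k}$. Only $-\D\widehat S_k$ raises the order, and it yields $(-1)^{n-1-k}\D^{n-1-k}\sigma$. Hence in $\Ef_n$ the coefficient of $\D^{n-1}\sigma$ is $(-1)^{n-1}$, as stated.

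\textbf{(ii) and (iv).} Write
\[
\widehat S_k=\sigma^{d}+A_k\sigma^{d-1}+B_k\sigma^{d-2}+\cdots,\qquad d=n-1-k,
\]
where, by weight, $A_k$ has weight $1$ and is a constant multiple of $\varphi_{n-1}$. Likewise $B_k$ has weight $2$ and is a combination of $\varphi_{n-1}^2$, $\D\varphi_{n-1}$, $\D\sigma$ and, once $k\le n-3$, possibly $\varphi_{n-2}$ (the only $\varphi_j$ of weight $2$).

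Care is needed because the coefficients may involve $\D\sigma$. Terms like $\sigma^{d-2}\D\sigma$ must count in the $\sigma^{d-2}$ slot, exactly as part (iv) does by including $-\binom n2\D\sigma$. The real bookkeeping is therefore over monomials of each weight, not over formal degree. So I would treat the expansion as over the ring generated by $\D\sigma,\D^2\sigma,\dots$ and the $\varphi_j$, and collect powers of the bare symbol $\sigma$.

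From the recursion:
\begin{itemize}
\item[$\bullet$] $-\D(\sigma^d)=-d\sigma^{d-1}\D\sigma$, which contributes to $B$;
\item[$\bullet$] $-\D(A_k\sigma^{d-1})$ gives $-(\D A_k)\sigma^{d-1}$ plus a lower-order term;
\item[$\bullet$] $\varphi_{n-1}\widehat S_k$ contributes $\varphi_{n-1}$ to $A$ and $\varphi_{n-1}A_k$ to $B$;
\item[$\bullet$] $-\varphi_k$ matters only when its $\sigma$-degree, which is zero, equals $d-1$ of the new element, i.e.\ for $k=n-2$ only.
\end{itemize}

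This gives
\[
A_{k-1}=A_k+\varphi_{n-1},\qquad
B_{k-1}=B_k-d\,\D\sigma-\D A_k+\varphi_{n-1}A_k+[k=n-2](-\varphi_{n-2}).
\]
The last term appears when $\widehat S_{n-3}$ is formed from $\widehat S_{n-2}=\sigma$; there $-\varphi_{n-2}$ is the degree-$(d-1)$ coefficient, not the $B$ slot. I would recheck the indexing carefully, since at $k=n-2$ the new degree is $2$, so $\sigma^0$ is the $B$ slot. Either way, $-\varphi_{n-2}$ enters once and is then propagated unchanged.

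Solving these recurrences: $A_k=(n-1-k)\varphi_{n-1}$, which gives (ii) at $k=-1$. Summing the $B$-increments, $\sum_d d=\binom n2$ gives the $\D\sigma$ coefficient. The sum of $(d-1)(\varphi_{n-1}^2-\D\varphi_{n-1})$ over $d$ gives $\binom{n-1}2(\varphi_{n-1}^2-\D\varphi_{n-1})$, and there is the single $-\varphi_{n-2}$. This is (iv).

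The main obstacle is this index and slot bookkeeping: making sure the $-d\,\D\sigma$ and $-\varphi_k$ contributions land in the correct power of $\sigma$ at the boundary steps $k=n-1,n-2$. I would validate it against~\eqref{eq:E3}. For $n=3$ the coefficient of $\sigma$ there is $\varphi_1-\varphi_2^2+\D\varphi_2$ together with the $3\D\sigma$ term, up to the overall sign convention $\Ef_3=-\D^2\sigma+\cdots$. This fixes the signs in the recurrence before the general sum is taken.
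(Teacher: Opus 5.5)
Your route is the paper's: downward induction through \eqref{eq:recursion}, tracking the two top coefficients, with the same recurrences $A_{k-1}=A_k+\varphi_{n-1}$ and $B_{k-1}=B_k-(n-1-k)\D\sigma-\D A_k+\varphi_{n-1}A_k$. The weight grading is a useful addition that the paper leaves implicit. It forces $A_k$ to be a numerical multiple of $\varphi_{n-1}$, it restricts $B_k$ to $\varphi_{n-1}^2$, $\D\varphi_{n-1}$, $\D\sigma$, $\varphi_{n-2}$, and it makes explicit the convention that $\D\sigma$ counts as a coefficient. Parts (i) and (iii) are fine.

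The boundary bookkeeping you flagged is, however, wrong at $k=n-1$, and as written your derivation of (ii) fails. Your claim that $-\varphi_k$ matters only for $k=n-2$ is false. At $k=n-1$ the new element $\widehat S_{n-2}$ has degree $1$, so $\sigma^0$ is its $A$-slot. There $-\varphi_{n-1}$ cancels the $\varphi_{n-1}$ coming from $\varphi_{n-1}\widehat S_{n-1}$. Hence $\widehat S_{n-2}=\sigma$ exactly, $A_{n-2}=B_{n-2}=0$, and the $A$-recurrence only starts at $k=n-2$. Its solution is $A_k=(n-2-k)\varphi_{n-1}=(d-1)\varphi_{n-1}$, not $(n-1-k)\varphi_{n-1}$. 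Your formula gives $A_{-1}=n\varphi_{n-1}$, contradicting (ii); for $n=4$ it would give $4\varphi_3$ instead of the $3\varphi_3$ in $\Ef_4$. Your $B$-sum silently uses the correct weight $d-1$. With the $A_k$ you actually wrote, the coefficient of $\varphi_{n-1}^2-\D\varphi_{n-1}$ would be $\binom n2$, and already $B_{n-3}$ would wrongly acquire $\varphi_{n-1}^2-\D\varphi_{n-1}$.

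The fix is to initialise at $\widehat S_{n-2}=\sigma$, as the paper does with $a_{n-2}=b_{n-2}=0$, and run both recurrences for $k=n-2,\dots,0$. Then $\sum_{k=0}^{n-2}(n-1-k)=\binom n2$ and $\sum_{k=0}^{n-2}(n-2-k)=\binom{n-1}{2}$.

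Your hedge about $-\varphi_{n-2}$ should also be settled rather than passed over with ``either way''. At $k=n-2$ the new element $\widehat S_{n-3}$ has degree $2$, so $\sigma^0$ is the $B$-slot. Your own weight count forbids $\varphi_{n-2}$ (weight $2$) from $A$, and if it did sit there it would change (ii). The remaining $-\varphi_k$ with $k<n-2$ have weight $n-k\ge3$ and cannot reach $B_{-1}$.

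Finally, your sanity check has the wrong sign. By your own (iii), $\Ef_3=+\D^2\sigma+\cdots$, and \eqref{eq:E3} is $\Ef_3=0$ solved for $\D^2\sigma$. So the $\sigma$-coefficient of $\Ef_3$ is $-\varphi_1+\varphi_2^2-\D\varphi_2-3\D\sigma$, in agreement with (iv).
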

\begin{proof}
Write $\widehat S_k=\sigma^{n-1-k}+a_k\sigma^{n-2-k}+b_k\sigma^{n-3-k}+\cdots$,
with $a_{n-2}=b_{n-2}=0$.  (i) In~\eqref{eq:recursion} the top power comes
only from $\sigma\widehat S_k$.  (ii) Comparing coefficients of
$\sigma^{n-1-k}$, $a_{k-1}=a_k+\varphi_{n-1}$ for $k\le n-2$, so
$a_{-1}=(n-1)\varphi_{n-1}$.  (iii) Only $-\D(\widehat S_k)$ raises the order,
by one and with a sign, at each of the $n-1$ steps after
$\widehat S_{n-2}=\sigma$.  (iv) Comparing coefficients of $\sigma^{n-2-k}$,
$b_{k-1}=b_k+\varphi_{n-1}a_k-(n-1-k)\D\sigma-\D(a_k)$ with
$a_k=(n-2-k)\varphi_{n-1}$; summing gives
$\binom{n-1}{2}(\varphi_{n-1}^2-\D\varphi_{n-1})-\binom n2\D\sigma$.  The
inhomogeneous term $-\varphi_k$ reaches degree at most $k$ in $\Ef_n$, so only
$-\varphi_{n-2}$ contributes to degree $n-2$.
\end{proof}

For $n=2$, $\Ef_2=\sigma^2+\varphi_1\sigma-\varphi_0-\D\sigma$.  For $n=3$,
$\Ef_3=0$ is~\eqref{eq:E3}.  For $n=4$,
\[
\begin{aligned}
  \Ef_4={}&\sigma^4+3\varphi_3\sigma^3
  +\bigl[-\varphi_2+3\varphi_3^2-3\D\varphi_3-6\D\sigma\bigr]\sigma^2\\
  &+\bigl[-\varphi_1+2\D\varphi_2+\varphi_3^3+\D^2\varphi_3
        +4\D^2\sigma-2\varphi_2\varphi_3-3\varphi_3\D\varphi_3
        -9\varphi_3\D\sigma\bigr]\sigma\\
  &-\varphi_0+\D\varphi_1-\D^2\varphi_2+3(\D\sigma)^2
        -\D^3\sigma-\varphi_1\varphi_3-\varphi_2\varphi_3^2
        +\varphi_2\D\varphi_3+\varphi_2\D\sigma\\
  &-3\varphi_3^2\D\sigma+2\varphi_3\D\varphi_2
        +3\varphi_3\D^2\sigma+3\D\varphi_3\,\D\sigma .
\end{aligned}
\]

\begin{table}[htbp]
\centering
\renewcommand{\arraystretch}{1.15}
\begin{tabular}{@{}cccc@{}}
\toprule
$n$ & degree in $\sigma$ & differential order & expanded terms \\
\midrule
2 & 2 & 1 & 4 \\
3 & 3 & 2 & 11 \\
4 & 4 & 3 & 27 \\
5 & 5 & 4 & 59 \\
6 & 6 & 5 & 122 \\
7 & 7 & 6 & 237 \\
\bottomrule
\end{tabular}
\caption{Size of the closing equation $\Ef_n$, with $\sigma$ and the
$\varphi_k$ treated as independent differential indeterminates.}
\label{tab:growth}
\end{table}


\end{document}